\pgfplotsset{compat=1.15}
\setlist[itemize,2]{label=$\centerdot$}
\setlist[itemize,3]{label=$\triangle$}
\renewcommand*{\@makefnmark}{\hbox{\@textsuperscript{%
			\normalfont\@thefnmark}}}
\def\@fnsymbol#1{\ensuremath{\ifcase#1\or \text{\Mercury}
		\or \text{\Venus} \or \text{\Earth} \or
		\text{\Jupiter} \or \text{\Saturn} \or \text{\Neptune} \or \text{\Uranus} \or
		\text{\Pluto}
		\or \text{\Moon} \or \text{\Sun}
		\else\@ctrerr\fi}}%
\newtheoremstyle{ptheorem}{1em}{0em}{\itshape}{}{\bfseries}{.}{.5em}{\thmname{#1}\thmnumber{
		#2}\thmnote{ (\hspace{-.01pt}{#3})}}
\theoremstyle{ptheorem}
\newtheorem{thm}{Theorem}[section]
\newtheorem{lem}[thm]{Lemma}
\newtheorem{cor}[thm]{Corollary}
\newtheoremstyle{hdef}{1em}{0em}{}{}{\bfseries}{.}{.5em}{\thmname{#1}\thmnumber{
		#2}\thmnote{ (\hspace{-.01pt}{#3})}}
\theoremstyle{hdef}
\newtheorem{dfn}[thm]{Definition}
\newtheorem{rem}[thm]{Remark}
\newtheorem{exa}[thm]{Example}
\numberwithin{equation}{section}
\numberwithin{figure}{section}
\renewcommand{\phi}{\varphi}
\newcommand{\olb}[1]{%
	\vbox{\offinterlineskip\ialign{\hfil##\hfil\cr
			$\rotatebox[origin=c]{90}{$]$}$\cr\noalign{\kern-.45ex}{$#1$}\cr}}}
\newcommand{\noop}[1]{}
\begin{document}

	\title{Necessary and sufficient conditions for distances\\on the real line}

\date{}

\author{Daniel Cao Labora$^{*\dagger}$\\
	\small e-mail:daniel.cao@usc.es\\
	Francisco J. Fernández$^*$\\
	\small e-mail: fjavier.fernandez@usc.es\\
	F. Adri\'an F. Tojo$^*$ \\
	\small e-mail: fernandoadrian.fernandez@usc.es
	\\Carlos Villanueva\\ \small e-mail: carlos.villanuevamariz@lmh.ox.ac.uk}

\maketitle

\begin{abstract}
When dealing with certain mathematical problems, it is sometimes necessary to show that some function induces a metric on a certain space. When this function is not a well renowned example of a distance, one has to develop very particular arguments that appeal to the concrete expression of the function in order to do so. The main purpose of this paper is to provide several sufficient results ensuring that a function of two variables induces a distance on the real line, as well as some necessary conditions, together with several examples that show the applicability of these results. In particular, we show how a hypothesis about the sign of the cross partial derivative of the candidate to distance is helpful for deriving such kind of results.
\end{abstract}

{\small\textbf{Keywords:} Distances, real line, integration}

{\small\textbf{MSC 2020:} 26B99, 51N20, 54E35, 00A08

\footnotetext{\noindent$^*$\emph{Departamento de Estatística, Análise Matemática e Optimización, Universidade de Santiago de Compostela, 15782, Facultade de Matemáticas, Santiago, Spain.}}
\footnotetext{\noindent$^\dagger$\emph{Corresponding author: daniel.cao@usc.es}}

\maketitle

\section{Motivation and introduction}

Throughout the rest of the document we will focus on distances on the set of real numbers $\mathbb{R}$. Thus, it is suitable to recall the definition of distance in the particular case of a distance on $\mathbb{R}$.

\begin{dfn} Given $d: \mathbb{R}^2 \to \mathbb{R}$, we say that $d$ is a \emph{metric} or \emph{distance} whenever it fulfills the following three properties simultaneously:
\begin{itemize}
\item \emph{Positive definiteness:} $d(x,y) \geq 0$ for any $x,y \in \mathbb{R}$, where $d(x,y)=0$ if and only if $x=y$.
\item \emph{Symmetry:} $d(x,y)=d(y,x)$ for any $x,y \in \mathbb{R}$.
\item \emph{Triangle inequality:} $d(x,y)+d(y,z) \leq d(x,z)$ for any $x,y,z \in \mathbb{R}$.
\end{itemize}
\end{dfn}

If one is given a certain function $d: \mathbb{R}^2 \to \mathbb{R}$ and is asked to prove that it is a distance on the real line, it is quite reasonable to proceed as follows. First, the symmetry of the $d$ should be quite clear, just by inspecting that $d$ stays invariant when interchanging the roles of $x$ and $y$. Positive definiteness should also be direct, or sometimes a mere consequence of a tricky factorization of $d$ that shows that the function is a square that only vanishes for $x=y$. Regarding the triangle inequality, one could try to use the very particular expression of $d$ in order to prove it, or some arguments involving concavity/convexity. However, how to state reasonably general theorems, with easy-to-check hypotheses and that ensure the triangle inequality is fulfilled does not seem immediate. This quest guides the main topic of this paper, together with the description of some necessary conditions for $d$ being a metric and a special mention to the case of translation invariant distances.

\section{A special case: translation invariant distances}

If we are interested in metrics on the real line, it is quite reasonable to put our initial goal on translation invariant distances. Informally, we consider distances such that, rather depending on the two variables $x$ and $y$, they only depend on the difference $x-y$. Thus, $d:{\mathbb R}^2\to \mathbb{R}$ is said to be a \emph{translation invariant distance} if $d$ is a distance and $d(x+z,y+z)=d(x,y)$ for every $x,y,z\in \mathbb{R}$.

In this very particular case, it is not complicated to characterize such distances. The fundamental notion necessary is that of subadditive function. In this sense, we say that $f:\mathbb{R} \to \mathbb{R}$ is \emph{subadditive} if $f(x+y)\leq f(x)+f(y)$ for any pair $x,y \in \mathbb{R}$. 

\begin{thm} \label{t0} The function $d:{\mathbb R}^2\to{\mathbb R}$ is a translation invariant distance if and only if it is of the form $d(x,y)=f(y-x)$ where $f:\mathbb{R} \to \mathbb{R}$ is an even subadditive function with $f(0)=0$ and $f(x)>0$ for any $x \neq 0$.
\end{thm}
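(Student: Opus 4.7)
The plan is to prove both implications by building a candidate $f$ from $d$ (respectively $d$ from $f$) and translating each of the four properties (translation invariance plus the three metric axioms) into the corresponding property of $f$. The natural candidate in the forward direction is $f(t):=d(0,t)$, and in the backward direction it is of course $d(x,y):=f(y-x)$.

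For the direction ($\Rightarrow$), assume $d$ is a translation invariant distance and set $f(t):=d(0,t)$. Translation invariance applied with the shift $-x$ immediately gives $d(x,y)=d(0,y-x)=f(y-x)$, so $d$ has the required form. The three conditions on $f$ are then read off from the metric axioms: $f(0)=d(0,0)=0$ and $f(t)>0$ for $t\neq 0$ come from positive definiteness; evenness follows by combining symmetry with translation invariance, via
\[
f(-t)=d(0,-t)=d(t,0)=d(0,t)=f(t),
\]
where the middle equality is translation invariance with shift $t$ and the subsequent one is symmetry; and subadditivity comes from the triangle inequality applied to the triple $0,x,x+y$, namely $f(x+y)=d(0,x+y)\le d(0,x)+d(x,x+y)=f(x)+f(y)$, the last equality being translation invariance again.

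For the direction ($\Leftarrow$), assume $f$ has the stated properties and define $d(x,y):=f(y-x)$. Translation invariance is tautological. Positive definiteness holds because the hypotheses $f(0)=0$ and $f(t)>0$ for $t\ne 0$ give $f\ge 0$ pointwise, with $d(x,y)=0\Leftrightarrow f(y-x)=0\Leftrightarrow y=x$. Symmetry follows from evenness: $d(x,y)=f(y-x)=f(x-y)=d(y,x)$. The triangle inequality is a direct application of subadditivity with the decomposition $z-x=(z-y)+(y-x)$, yielding
\[
d(x,z)=f(z-x)\le f(z-y)+f(y-x)=d(y,z)+d(x,y).
\]

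I do not foresee a real obstacle here; the only minor subtlety is to be careful in the forward direction that evenness of $f$ uses both symmetry of $d$ \emph{and} translation invariance (symmetry alone gives $d(0,t)=d(t,0)$, which is not the same as $f(-t)=f(t)$), and that nonnegativity of $f$ is a consequence of, rather than an additional hypothesis to impose on, the conditions $f(0)=0$ and $f(t)>0$ for $t\ne 0$.
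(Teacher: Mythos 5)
Your proposal is correct and follows essentially the same route as the paper: both directions use the candidate $f(t)=d(0,t)$ (resp.\ $d(x,y)=f(y-x)$) and translate each axiom directly, with only cosmetic differences (you pivot the triangle inequality at $x$ for the triple $0,x,x+y$, while the paper shifts by $-x$ and pivots at $0$; the evenness argument applies symmetry and translation invariance in the opposite order). Your closing remark about evenness genuinely requiring both symmetry and translation invariance is accurate and matches the chain of equalities in the paper's proof.
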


\begin{proof}First assume that $d:{\mathbb R}^2\to{\mathbb R}$ is a translation invariant distance. Define $f(x)=d(0,x)$ for $x\in \mathbb{R}$. Clearly $f(0)=0$, $f(x)>0$ for any $x \neq 0$, and $d(x,y)=d(0,y-x)=f(y-x)$ for any $x,y\in \mathbb{R}$, since $d$ is translation invariant. Besides, $f$ is even since, for $x \in \mathbb{R}$, \[ f(x)=d(0,x)=d(x,0)=d(0,-x)=f(-x).\]  Furthermore, for any $x,y\in \mathbb{R}$, \[ f(x+y)=d(0,x+y)=d(-x,y)\leq d(-x,0)+d(0,y)=d(0,x)+d(0,y)=f(x)+f(y),\]  so $f$ is subadditive. 

On the other hand, if $f:\mathbb{R}\to \mathbb{R}$ is an even subadditive function with $f(0)=0$ and $f(x)>0$ for $x \neq 0$, let us define $d(x,y):=f(y-x)$ and prove that $d$ is a distance. Indeed, $d(x,x)=f(0)=0$ for every $x\in {\mathbb R}$, $d(x,y)=f(y-x)>0$ for every $x \neq y$, and $d(x,y)=f(y-x)=f(x-y)=d(y,x)$ for every $x,y\in{\mathbb R}$. Finally, \[ d(x,z)=f(z-x) = f(z-y+y-x)\leq f(z-y)+f(y-x)=d(y,z)+d(x,y),\]  for every $x,y,z\in{\mathbb R}$, so the triangle inequality holds.
\end{proof}

\begin{rem}Observe that, as a consequence of Theorem~\ref{t0}, a distance $d$ is translation invariant if and only if it can be factorized as $d = g \circ l$, where $l: \mathbb{R}^2 \to [0,\infty)$ is the usual distance, $l(x,y)=\vert x - y \vert$, and $g: [0,\infty) \to [0,\infty)$ is a function with $g(0)=0$, $g(x)>0$ for $x>0$, and whose even extension is subadditive.
\end{rem}
The following example shows that, in general, the even extension of a subadditive function is not subadditive. This implies that, in the previous paragraph, it is not enough to ensure that $g$ is subadditive, but we need to ensure that the even extension of $g$ is subadditive.

\begin{exa}\label{exa1} Consider the even function $f:\mathbb{R} \to \mathbb{R}$ such that 
\[ f(x)=	\begin{dcases}
			\left\lvert x\right\rvert, & 0\leq \vert x \vert <1, \\
			2-\left\lvert x\right\rvert, & 1\leq \vert x \vert <\frac{5}{3}, \\
			\frac{1}{3}, & \frac{5}{3}\leq \vert x \vert. \end{dcases}\] 
\begin{figure}[h!]
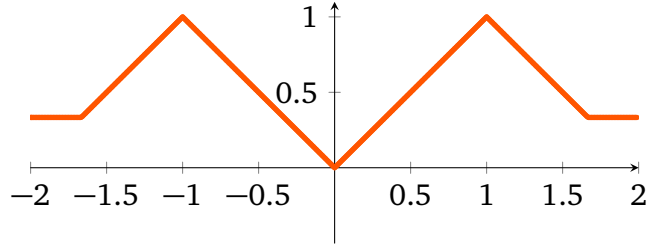

	\centering
\definecolor{ffvvqq}{rgb}{1.,0.3333333333333333,0.}

	\caption{Graph of the function $f$ in Example~\ref{exa1}.}
\end{figure}			
Clearly, $f$ is not subadditive since $f(3-2)=f(1)=1>2/3=f(3)+f(-2)$. Nevertheless, the restriction of $f$ to $[0,\infty)$, which we will denote by $g$, is subadditive. In order to prove this claim, let us consider $x \geq y \geq 0$ and analyze a few cases.
\begin{enumerate}
\item If $x+y \geq 2$, then $x \geq 1$. Thus, $g(x+y) \leq g(x)+g(y)$ since $g$ is decreasing on $[1,\infty)$.
\item If $1 \leq x+y \leq 2$, the subadditivity is clear when $x \geq 1$ since $g$ is decreasing on $[1,\infty)$. If $x<1$, then $ x \geq y>0$ and $g(x)+g(y)=x+y \geq 1 \geq g(x+y)$.
\item If $x+y \leq 1$, then simply $g(x+y)=x+y=g(x)+g(y)$.
\end{enumerate}
Thus, in this case, the definition $d(x,y)=g(\vert x-y \vert)$ does not induce a distance, even though $g$ is subadditive, and the underlying reason is that its even extension $f$ is not subadditive.
\end{exa}

\begin{rem}We observe that, when $g$ is non-decreasing and subadditive, then its even extension $f$ is automatically subadditive. The key consideration for proving this relies on the fact that, for any $x,y \in \mathbb{R}$, we have $g(\vert x + y \vert) \leq g(\vert x\vert)+ g(\vert y \vert)$. Observe how if $x$ and $y$ have the same sign, this is simply the subadditivity of $g$ because $\vert x + y \vert = \vert x \vert + \vert y \vert$. If $x$ and $y$ have different signs, then either $\vert x + y \vert < \vert x \vert$ or $ \vert x + y \vert < \vert y \vert$ and, since $g$ is non-decreasing, $g(\vert x + y \vert) \leq g(\vert x\vert)+ g(\vert y \vert)$.
\end{rem}
\begin{rem} \label{r1}
If $g:[0,\infty) \to [0,\infty)$ is non-decreasing and subadditive and $f$ is the even extension of $g$, we have that \[ f(x+y)=g(\vert x + y \vert) \leq g(\vert x\vert)+ g(\vert y \vert)=f(x)+f(y),\]  for every $x,y \in \mathbb{R}$. Thus, under these hypotheses, $f$ is also subadditive.
\end{rem}

Consequently, from Theorem~\ref{t0} and Remark~\ref{r1}, we derive the following corollary.

\begin{cor} \label{c1} The function $d:{\mathbb R}^2\to{\mathbb R}$ is a translation invariant distance fulfilling $d(0,x) \leq d(0,y)$ for any $x,y \in \mathbb{R}$, where $0 \leq x \leq y$, if and only if it is of the form $d(x,y)=g(\vert y-x \vert)$ where $g:[0,\infty) \to \mathbb{R}$ is a subadditive non-decreasing function with $g(0)=0$ and $g(x)>0$ for any $x > 0$.
\end{cor}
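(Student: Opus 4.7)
The plan is to deduce Corollary~\ref{c1} almost entirely from Theorem~\ref{t0} together with Remark~\ref{r1}, so the proof is essentially a matter of translating between $f$ defined on all of $\mathbb{R}$ and its restriction $g$ to $[0,\infty)$, and checking that the extra monotonicity hypothesis on $d$ corresponds precisely to $g$ being non-decreasing.

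For the forward implication, I would start by assuming $d$ is a translation invariant distance satisfying $d(0,x)\leq d(0,y)$ whenever $0\leq x\leq y$. Theorem~\ref{t0} immediately yields an even subadditive $f:\mathbb{R}\to\mathbb{R}$ with $f(0)=0$, $f(x)>0$ for $x\neq 0$, and $d(x,y)=f(y-x)$. I then set $g:=f|_{[0,\infty)}$. Since $f$ is even, $f(y-x)=f(|y-x|)=g(|y-x|)$, giving the desired form. The properties $g(0)=0$ and $g(x)>0$ for $x>0$ are inherited from $f$, and subadditivity of $g$ on $[0,\infty)$ follows directly from subadditivity of $f$ because the relevant sum $x+y$ stays in $[0,\infty)$. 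Finally, $g$ is non-decreasing because for $0\leq x\leq y$ we have $g(x)=f(x)=d(0,x)\leq d(0,y)=f(y)=g(y)$, which is exactly the extra hypothesis on $d$.

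For the reverse implication, I would assume $g:[0,\infty)\to\mathbb{R}$ is a subadditive non-decreasing function with $g(0)=0$ and $g(x)>0$ for $x>0$, and let $f$ be its even extension. Remark~\ref{r1} supplies exactly the nontrivial step: under these hypotheses $f$ is subadditive on all of $\mathbb{R}$. Since $f$ is even by construction and inherits $f(0)=0$ and $f(x)>0$ for $x\neq 0$ from $g$, Theorem~\ref{t0} applies and $d(x,y):=f(y-x)=g(|y-x|)$ is a translation invariant distance. The monotonicity condition is then immediate: for $0\leq x\leq y$, $d(0,x)=g(x)\leq g(y)=d(0,y)$ because $g$ is non-decreasing.

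No step is really an obstacle; the only substantive content beyond bookkeeping is the passage from subadditivity of $g$ on $[0,\infty)$ to subadditivity of its even extension $f$ on $\mathbb{R}$, and this is precisely what Remark~\ref{r1} provides (and what Example~\ref{exa1} shows can fail without the monotonicity assumption). So the proof is essentially a two-line invocation of Theorem~\ref{t0} and Remark~\ref{r1} in each direction, plus the trivial verification that the monotonicity of $g$ on $[0,\infty)$ and the inequality $d(0,x)\leq d(0,y)$ for $0\leq x\leq y$ express the same condition.
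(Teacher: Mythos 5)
Your proof is correct and follows exactly the route the paper intends: the paper gives no written proof of Corollary~\ref{c1}, merely stating that it follows from Theorem~\ref{t0} and Remark~\ref{r1}, and your argument is precisely the bookkeeping that derivation requires, with the key nontrivial step (subadditivity of the even extension) correctly delegated to Remark~\ref{r1}.
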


Finally, we observe that there are examples of distances which can be obtained from the even extension $f$ of a non-monotonic function $g$ where $f$ is subadditive. We show this via the following suitably modified version of the previous example.
\begin{exa}\label{exa2} Consider the even function $f:\mathbb{R} \to \mathbb{R}$ fulfilling 
\[ f(x)=	\begin{dcases}
			\left\lvert x\right\rvert, & 0\leq \vert x \vert <1, \\
			2-\left\lvert x\right\rvert, & 1\leq \vert x \vert <\frac{4}{3}, \\
			\frac{2}{3}, & \frac{4}{3}\leq \vert x \vert. \end{dcases}\] 

\begin{figure}[h!]
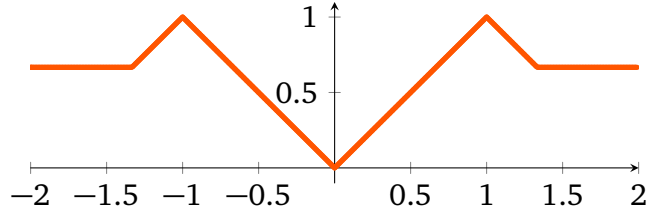

	\centering

	\caption{Graph of the function $f$ in Example~\ref{exa2}.}
\end{figure}

We claim that $f$ is subadditive, and we will prove this by distinguishing several cases. Note that, since $f$ is even, we can assume $x+y \geq 0$. Besides, without loss of generality, we will assume $x \geq y$, so we will also have $x \geq 0$.
\begin{enumerate}
\item If $x+y \geq 2$, then $x \geq 1$. Thus, $f(x+y) \leq f(x)+f(y)$ since $f$ is decreasing on $[1,\infty)$ and $f(x+y)=\min\{f(s):\; s\in [1,\infty)\}$.
\item If $1 \leq x+y \leq 2$, the subadditivity is clear when $x \geq 1$ since $f$ is decreasing on $[1,\infty)$. If $x<1$, then $1>x \geq y>0$ necessarily, so $f(x)+f(y)=x+y \geq 1 \geq f(x+y)$.
\item If $2/3 \leq x+y \leq 1$, and $x \leq x+y$ then $x \geq y \geq 0$ necessarily, so $f(x)+f(y)=x+y =f(x+y)$. The complicated case happens when $x \geq x+y$, so $y \leq 0$. If $x \leq 1$, then $f(x)+f(y)=x-y \geq x+y=f(x+y)$. If $1 \leq x \leq 4/3$, then $f(x)+f(y)=2-(x+y) \geq x+y$. If $x \geq 4/3$, then $y \leq -1/3$ so $f(x)+f(y) \geq 1 \geq f(x+y)$.
\item If $0 \leq x+y \leq 2/3$, the result is clear when $x \geq x+y$ since $f(x+y) = x+y$ and the minimum for $f$ on $[x+y,\infty)$ is $x+y$. If $x \leq x+y$, then $x \geq y \geq 0$ necessarily, so $f(x)+f(y)=x+y =f(x+y)$.
\end{enumerate}
\end{exa}

%

\section{Necessary conditions}

In the previous section we have dealt with the particular case of translation invariant metrics on $\mathbb{R}$, providing a characterization for such distances. Now, we focus on the generic case of a distance on the real line, that will be denoted by $d: \mathbb{R}^2 \to \mathbb{R}$. In this case, we cannot expect to find a characterization in order to know whether $d$ is a distance or not, but only necessary or sufficient conditions.

In this section we prove some necessary conditions on $d$, provided that $d: \mathbb{R}^2 \to \mathbb{R}$ is a metric. In the rest of the document, $\Delta \subset \mathbb{R}^2$ will denote the diagonal of the cartesian plane, that is, \[ \Delta=\{(x,x) \in \mathbb{R}^2: x \in \mathbb{R}\}.\]  Let $X:=\{(x,y)\in{\mathbb R}^2\ :\ x\leq y\}$, $Y:=\{(x,y)\in{\mathbb R}^2\ :\ y\leq x\}$. Given a function $d:{\mathbb R}^2\to{\mathbb R}$ and $(x,y),v\in{\mathbb R}^2$, we will define the \emph{directional derivative from the right} as \[ \partial_v^+ d(x,y):=\lim_{h\to 0^+}\frac{d((x,y)+hv)-d(x,y)}{h},\]  in case the limit exists. We will use the notation $\partial_1^+$, $\partial_2^+$, $\partial_1^-$ and $\partial_2^-$ for the cases $v=(1,0)$, $(0,1)$, $(-1,0)$ and $(0,-1)$ respectively.

Since $X$ and $Y$ are not open sets, we provide a short comment regarding the notion of differentiability for $d|_X$ and $d|_Y$. In the interior of $X$ (respectively $Y$) the notion of differentiability is well known. With respect to the points of the form $(x,x)\in \Delta$, we understand that $d|_X$ is differentiable at $(x,x)\in \Delta$ if there exists $w\in{\mathbb R}^2$ such that for every $(\widetilde x,\widetilde y)\in X$,
\[ d(\widetilde x,\widetilde y)=d(x,x)+w\cdot(\widetilde x-x,\widetilde y-y)+o(\left\lVert (\widetilde x-x,\widetilde y-y)\right\rVert),\] 
where $\cdot$ denotes the scalar product, and $o$ is used for the Landau notation. Since we can take $(\widetilde x,\widetilde y)$ such that $(\widetilde x-x,\widetilde y-y)=(0,1)$ or $(\widetilde x-x,\widetilde y-y)=(-1,0)$, the choice for $w$, if it exists, is unique. In case of existence of such a $w$, we say that $w$ is the \emph{derivative or gradient of $d|_X$ at $(x,x)$} and we write $\nabla d( x, y)=w$. A similar definition goes for $Y$.

Observe that, due to the symmetry property, if $d\in{\mathcal C}({\mathbb R}^2,[0,\infty))$ is a distance and $d|_X$ is differentiable, then $d|_Y$ is differentiable too. Hence, if $d\in{\mathcal C}({\mathbb R}^2,[0,\infty))$ is a distance and $d|_X$ is differentiable, $\partial_v^+d(x,y)$ exists for every $(x,y)$ and any $v\in{\mathbb R}^2$. Besides, due to the symmetry of $d$, $\partial_1^+ d(x,y)=\partial_2^+ d(y,x)$ and $\partial_1^- d(x,y)=\partial_2^- d(y,x)$.

\subsection{Conditions involving first order derivatives}

 In this section we will prove some necessary conditions involving first order derivatives to guarantee that a function $d$ is a distance.

\begin{thm}\label{thmn1}
	Let $d\in{\mathcal C}({\mathbb R}^2,[0,\infty))$ be a distance such that $d|_X$ is differentiable, $(x,y)\in {\mathbb R}^2$. Then we have that $\left\lvert \partial_2^-d(x,y)\right\rvert\leq \left\lvert \partial_2^- d(y,y)\right\rvert$ and $\left\lvert \partial_2^+d(x,y)\right\rvert\leq \left\lvert \partial_2^+ d(y,y)\right\rvert$ for any $x,y \in \mathbb{R}$.
\end{thm}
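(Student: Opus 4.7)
The plan is to derive the inequality directly from the triangle inequality, exploiting the fact that $d(y,y)=0$ is a minimum of $d$ along the vertical line through $(y,y)$.

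First I would fix $h>0$ and apply the triangle inequality twice: on the one hand $d(x,y+h)\le d(x,y)+d(y,y+h)$, and on the other hand $d(x,y)\le d(x,y+h)+d(y+h,y)$. Combining these and using symmetry $d(y,y+h)=d(y+h,y)$ yields the key estimate
\[
\bigl| d(x,y+h)-d(x,y)\bigr|\le d(y,y+h).
\]
The analogous argument with $y-h$ in place of $y+h$ gives $|d(x,y-h)-d(x,y)|\le d(y,y-h)$.

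Next I would divide both estimates by $h>0$ and rewrite the right-hand sides using $d(y,y)=0$:
\[
\left|\frac{d(x,y+h)-d(x,y)}{h}\right|\le \frac{d(y,y+h)-d(y,y)}{h},\qquad \left|\frac{d(x,y-h)-d(x,y)}{h}\right|\le \frac{d(y,y-h)-d(y,y)}{h}.
\]
Passing to the limit $h\to 0^+$ (the existence of all four one-sided derivatives at $(x,y)$ and $(y,y)$ is guaranteed by the hypothesis that $d|_X$ is differentiable, together with symmetry, as discussed in the paragraph preceding the theorem), and using continuity of the absolute value, I get
\[
\bigl|\partial_2^+ d(x,y)\bigr|\le \partial_2^+ d(y,y),\qquad \bigl|\partial_2^- d(x,y)\bigr|\le \partial_2^- d(y,y).
\]

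Finally I would remove the absolute values on the right by noting that $y\mapsto d(y,\cdot)$ attains its minimum $0$ at the second coordinate equal to $y$: the quotients $[d(y,y\pm h)-d(y,y)]/h=d(y,y\pm h)/h$ are non-negative for every $h>0$, so their limits $\partial_2^{\pm} d(y,y)$ are non-negative and therefore coincide with their own absolute values. This gives exactly the stated inequalities. I don't anticipate a real obstacle; the only point demanding care is bookkeeping the signs in the definition of $\partial_v^\pm$ so that the direction vectors $v=(0,1)$ and $v=(0,-1)$ are correctly matched with $y+h$ and $y-h$ respectively.
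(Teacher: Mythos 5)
Your proof is correct and follows essentially the same route as the paper: both derive $\lvert d(x,z)-d(x,y)\rvert\le d(y,z)=d(y,z)-d(y,y)$ from two applications of the triangle inequality plus symmetry, divide by the increment, and pass to the one-sided limits. The only cosmetic difference is that you parametrize the increment as $y\pm h$ rather than a third point $z\to y^{\pm}$, and you add the (harmless, correct) observation that $\partial_2^{\pm}d(y,y)\ge 0$ so the absolute values on the right are redundant.
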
 

\begin{proof} Let $x,y,z\in{\mathbb R}$. By the triangle inequality, $d(x,z)\leq d(x,y)+d(y,z)$ and $d(x,y)\leq d(x,z)+d(y,z)$, so $\left\lvert d(x,z)-d(x,y)\right\rvert\leq d(y,z)$. Now,
\[ \left\lvert \frac{d(x,z)-d(x,y)}{\vert z-y \vert}\right\rvert\leq \left\lvert \frac{d(y,z)}{\vert z-y \vert}\right\rvert =\left\lvert \frac{d(y,z)-d(y,y)}{\vert z-y \vert}\right\rvert. \] 
If $z \to y^-$, we deduce $\left\lvert \partial_2^-d(x,y)\right\rvert\leq \left\lvert \partial_2^- d(y,y)\right\rvert$ for any $x,y \in \mathbb{R}$. Analogously, if $z \to y^+$, we deduce the inequality $\left\lvert \partial_2^+d(x,y)\right\rvert\leq \left\lvert \partial_2^+ d(y,y)\right\rvert$.
\end{proof}


%

\begin{rem}
The symmetry of $d$ implies that $\left\lvert \partial_1^-d(y,x)\right\rvert\leq \left\lvert \partial_1^- d(y,y)\right\rvert$ and $\left\lvert \partial_1^+d(y,x)\right\rvert\leq \left\lvert \partial_1^+ d(y,y)\right\rvert$ for any $x,y \in \mathbb{R}$.
\end{rem}

\begin{cor}\label{cor1}
	Let $d\in{\mathcal C}({\mathbb R}^2,[0,\infty))$ be a distance such that $d|_X$ is differentiable. Then, $d$ is not differentiable at any point of $\Delta$.
\end{cor}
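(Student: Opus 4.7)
The plan is a proof by contradiction: assume that $d$ is differentiable at some diagonal point $(y_0,y_0)\in\Delta$ and derive an incompatibility with Theorem~\ref{thmn1}.

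First, I would show that the gradient at $(y_0,y_0)$ must vanish. Since $d\geq 0$ on $\mathbb{R}^2$ with $d(y_0,y_0)=0$, the one-sided quantities
\[\partial_2^+d(y_0,y_0)=\lim_{h\to 0^+}\frac{d(y_0,y_0+h)}{h}\quad\text{and}\quad\partial_2^-d(y_0,y_0)=\lim_{h\to 0^+}\frac{d(y_0,y_0-h)}{h}\]
are nonnegative, as limits of ratios with nonnegative numerator and positive denominator. On the other hand, differentiability at $(y_0,y_0)$ identifies these with the directional derivatives $\nabla d(y_0,y_0)\cdot(0,1)$ and $\nabla d(y_0,y_0)\cdot(0,-1)$, which are negatives of each other. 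Reconciling ``both nonnegative'' with ``opposite signs'' forces $\partial_2^+d(y_0,y_0)=\partial_2^-d(y_0,y_0)=0$.

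Second, I would invoke Theorem~\ref{thmn1} at $y=y_0$: the inequalities $|\partial_2^\pm d(x,y_0)|\leq|\partial_2^\pm d(y_0,y_0)|=0$ collapse to
\[\partial_2^+d(x,y_0)=\partial_2^-d(x,y_0)=0\qquad\text{for every }x\in\mathbb{R},\]
so the map $y\mapsto d(x,y)$ is forced to be stationary at $y_0$ for \emph{every} $x\in\mathbb{R}$.

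The main obstacle, and the heart of the proof, is the third step: converting this wholesale vanishing of transverse one-sided derivatives along the horizontal line $\{y=y_0\}$ into a concrete contradiction with positive definiteness. For any $x\neq y_0$ the continuous function $y\mapsto d(x,y)$ is positive at $y_0$ and zero at $y=x$, while by the previous step its one-sided derivatives at $y_0$ both vanish. The natural way to close the argument is to combine this stationarity with the symmetric identities $\partial_1^+d(y_0,x)=\partial_1^-d(y_0,x)=0$ (obtained either by the symmetry $d(x,y)=d(y,x)$ or by the analog of Theorem~\ref{thmn1} in the first variable applied at $x=y_0$), and then to chain triangle inequalities along a subdivision of a segment based at $y_0$, forcing the distance between two genuinely distinct points to be arbitrarily small, which contradicts $d$ being a metric.
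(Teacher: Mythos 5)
Your first two steps are sound and essentially match the paper. The paper derives $\nabla d(y_0,y_0)=0$ from $d\equiv 0$ on $\Delta$ together with symmetry (computing the directional derivative along $(1,1)$), whereas you derive it from the fact that a differentiable nonnegative function vanishing at $(y_0,y_0)$ must have a critical point there; both routes are valid, and your appeal to Theorem~\ref{thmn1} to conclude $\partial_2^{\pm}d(x,y_0)=0$ for every $x$ is exactly the paper's next move.

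The genuine gap is precisely where you locate the ``heart of the proof'': your third step is a hope, not an argument, and as sketched it cannot be completed. Chaining triangle inequalities over a subdivision $a=t_0<\dots<t_n=b$ gives $d(a,b)\le\sum_i d(t_{i-1},t_i)$, and to make the right-hand side small you would need $d(t_{i-1},t_i)=o(t_i-t_{i-1})$, i.e.\ first-order vanishing of $d$ at \emph{every} diagonal point of the segment --- information you do not have, since differentiability was assumed only at the single point $(y_0,y_0)$; routing the inequalities ``through $y_0$'' produces bounds in the useless direction. In fact no argument can close the contradiction from your steps one and two alone: the metric $d(x,y)=\lvert x^3-y^3\rvert$ is continuous, has $d|_X$ differentiable (on $X$ it equals $y^3-x^3$), is differentiable at $(0,0)$ with $\nabla d(0,0)=0$, and satisfies $\partial_2^{\pm}d(x,0)=0$ for every $x$, yet it is a genuine distance. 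The paper closes the argument differently, by integrating: $d(x,y_0)=-\int_{y_0}^{x}\partial_2 d(x,z)\,\mathrm{d}z$, declared to vanish; this needs $\partial_2 d(x,z)=0$ for every $z$ between $y_0$ and $x$, which in turn requires running steps one and two at every diagonal point $(z,z)$, i.e.\ assuming differentiability of $d$ on all of $\Delta$. So the contradiction one can actually reach is with differentiability at \emph{every} point of $\Delta$ simultaneously; your proposal, which tries to rule out differentiability at a single diagonal point, is aiming at a statement that the example above shows cannot be obtained from the ingredients you have assembled.
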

\begin{proof}
Since $d(x,y)=d(y,x)$, $\partial_1 d(x,y)=\partial_2 d(y,x)$ for $x\ne y$, and $\partial_1^+ d(x,x)=\partial_2^+ d(x,x)$. Assume $d$ is differentiable at $(y,y)\in\Delta$. Then, if $v=(1,1)$ and we compute the directional derivative of $d$ in the direction of $v$, we get that, since $d(x,x)=0$ for every $x\in{\mathbb R}$,
	\[ 0=\partial_v^+ d(y,y)=\partial_1^+ d(y,y)+\partial_2^+ d(y,y)=2\partial_2^+ d(y,y),\] 
and we conclude that $\nabla d(y,y)=0$. Now, for $x>y$, by Theorem~\ref{thmn1}, we have that \[ \left\lvert \partial_2 d^+(x,y)\right\rvert\leq \left\lvert \partial_2^+ d(y,y)\right\rvert=0,\]  that is, $\partial_2 d(x,y)=0$. Hence, for $x>y$,
	$d(x,y)=-\int_{y}^x\partial_2 d(x,z)\operatorname{d} z=0$, which is not possible since $d$ is a distance.
\end{proof}

\begin{rem} Observe that, by the same reasoning as in Corollary~\ref{cor1}, $\partial_1^+ d(x,x)=\partial_2^+ d(x,x)>0$, $\partial_1^- d(x,x)=\partial_2^- d(x,x)>0$ for every $x\in {\mathbb R}$.
\end{rem}

\subsection{Conditions involving second order derivatives}

 Now, we will analyze some necessary conditions involving second order derivatives to guarantee that a function $d$ is a distance.

\begin{thm}
	Let $d\in{\mathcal C}({\mathbb R}^2,[0,\infty))$ be a distance such that 
	 $d|_X$ twice differentiable.	Then, $\partial_1^-(\partial_1^+ d)(x,x)\leq \partial_1^-(\partial_1^+ d)(x,y)$, for every $(x,y)\in {\mathbb R}^2$.
\end{thm}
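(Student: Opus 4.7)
The plan is to reduce the claim to a statement about a one-variable function that attains its minimum at $u=x$. By the triangle inequality $d(u,y)\le d(u,x)+d(x,y)$, the function
\[
\varphi(u):=d(u,x)+d(x,y)-d(u,y)
\]
is non-negative on $\mathbb{R}$ with $\varphi(x)=0$. Since $d(x,y)$ is constant in $u$, the additivity of the one-sided limits defining $\partial_1^{\pm}$ yields $\partial_1^+\varphi(u)=\partial_1^+ d(u,x)-\partial_1^+ d(u,y)$ and, consequently,
\[
\partial_1^-(\partial_1^+\varphi)(x)=\partial_1^-(\partial_1^+ d)(x,x)-\partial_1^-(\partial_1^+ d)(x,y).
\]
Hence the claim is equivalent to showing $\partial_1^-(\partial_1^+\varphi)(x)\le 0$.

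For this remaining step, I would assume $y\neq x$ (the case $y=x$ being trivial with equality). For $h>0$ sufficiently small, both $(x-h,x)$ and $(x-h,y)$ lie strictly off the diagonal, either in the interior of $X$ or of $Y$. The twice-differentiability of $d|_X$ (and, by the symmetry observation already made in the paper, of $d|_Y$) guarantees that $h\mapsto\partial_1^+ d(x-h,v)$ is continuous at $h=0^+$; its limiting value at $v=x$ is the boundary partial $\partial_1 d|_X(x,x)$, which, by the definition of $\nabla d|_X$ at a diagonal point, equals $-\partial_1^- d(x,x)$, whereas its limit at $v=y$ is simply the interior partial $\partial_1^+ d(x,y)$. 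Therefore
\[
\partial_1^+\varphi(x-h)-\partial_1^+\varphi(x)\ \xra[h\to 0^+]{}\ -\bigl(\partial_1^+ d(x,x)+\partial_1^- d(x,x)\bigr)<0,
\]
where the strict inequality is exactly the Remark following Corollary~\ref{cor1}. Dividing by $h>0$ and passing to the limit gives $\partial_1^-(\partial_1^+\varphi)(x)=-\infty\le 0$, which is what was needed.

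The delicate point is not computational but is the careful book-keeping at the diagonal: $\partial_1^+ d(x,x)$ is computed via the direction $(1,0)$ that enters $Y$, whereas the limit of $\partial_1^+ d(x-h,x)$ as $h\to 0^+$ is controlled by the $X$-side boundary partial $\partial_1 d|_X(x,x)=-\partial_1^- d(x,x)$. Because both $\partial_1^+ d(x,x)$ and $\partial_1^- d(x,x)$ are strictly positive at a diagonal point of a metric, these values cannot cancel, which drives the numerator of the difference quotient to a strictly negative constant and the quotient itself to $-\infty$; the trivial bound $-\infty\le\partial_1^-(\partial_1^+ d)(x,y)$ then yields the theorem.
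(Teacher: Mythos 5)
Your proof is correct when read in the extended reals, but it finishes by a genuinely different mechanism from the paper's. The paper never isolates the diagonal quantity: it rewrites both $-\partial_1^-(\partial_1^+ d)(x,y)$ and $-\partial_1^-(\partial_1^+ d)(x,x)$ as limits of symmetric second difference quotients $\frac{1}{h^2}\left(d(x+h,\cdot)-2d(x,\cdot)+d(x-h,\cdot)\right)$ and compares the two quotients for each fixed $h$ via the triangle inequality $d(x\pm h,y)\le d(x\pm h,x)+d(x,y)$ --- which is precisely the statement $\varphi(x+h)+\varphi(x-h)-2\varphi(x)\ge 0$ for your $\varphi$. Had you finished that way (the second difference of a non-negative function at a point where it vanishes is non-negative), you would have reproduced the paper's argument. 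Instead you evaluate $\partial_1^-(\partial_1^+\varphi)(x)$ outright and find it is $-\infty$, because $u\mapsto d(u,x)$ has a genuine corner at $u=x$: its derivative from the right is $\partial_1^+d(x,x)>0$, while the limit of $\partial_1^+d(x-h,x)$ as $h\to 0^+$ is $-\partial_1^-d(x,x)<0$. Your bookkeeping here is correct (continuity of $\nabla d|_X$ up to $\Delta$ does follow from twice differentiability of $d|_X$, and the strict positivity is the paper's own remark), and the splitting of the limit is legitimate because $\partial_1^-(\partial_1^+d)(x,y)$ exists finitely for $y\ne x$. What your route buys --- and the paper's formal manipulations conceal --- is that $\partial_1^-(\partial_1^+d)(x,x)=-\infty$ for \emph{every} distance satisfying the hypotheses, so the stated inequality, while true, is vacuous, and the subsequent remark's use of $-\partial_1^+(\partial_1^-d)(x,x)$ as a finite upper bound for $\partial_{11}d(x,y)$ is not justified by this theorem. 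Two small presentational points: once the diagonal quantity is known to be $-\infty$ the detour through $\varphi$ is unnecessary; and the identity $\partial_1^-(\partial_1^+\varphi)(x)=\partial_1^-(\partial_1^+d)(x,x)-\partial_1^-(\partial_1^+d)(x,y)$ should be stated as holding in $[-\infty,+\infty]$ with the second term finite, rather than asserted before either side is known to exist.
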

\begin{proof}
Given that $d|_X$ is twice differentiable, for any $(x,y)\in{\mathbb R}$ and $h>0$ we have that, using the triangle inequality,


\begin{align*}-\partial_1^-(\partial_1^+d)(x,y)= & -\frac{1}{h} \left[
\partial_1^+ d(x-h,y)-\partial_1^+(x,y)+o(h)\right] \\
=& -\frac{1}{h} \left[\frac{1}{h} \left[
d(x,y)-d(x-h,y)+o(h)
\right]-\frac{1}{h}\left[
d(x+h,y)-d(x,y)+o(h)
\right]+o(h)
\right]\\
=&+ \frac{1}{h} \left[
\frac{1}{h} \left[
d(x+h,y)-2d(x,y)+d(x-h,y)
+o(h)
\right]
+o(h)
\right]\\
\leq &+ \frac{1}{h} \left[
\frac{1}{h} \left[
d(y,x)+d(x,x+h)-2d(x,y)+d(y,x)+d(x,x-h)
+o(h)
\right]
+o(h)
\right] \\
=& +\frac{1}{h}\left[
\frac{1}{h}\left[
d(x+h,x)+d(x-h,x)-2d(x,x)
+o(h)
\right]
+o(h)
\right]\\
=&-\frac{1}{h} \left[
\frac{1}{h} \left[ 
d(x,x)-d(x-h,x)
+o(h)
\right]-\frac{1}{h}\left[
d(x+h,x)-d(x,x)
+o(h)
\right]
+o(h)
\right]\\
=&-\frac{1}{h}\left[
\partial_1^+d(x-h,x)-\partial_1^+d(x,x)
+o(h)
\right] \\
=& -\partial_1^-(\partial_1^+ d)(x,x)
\end{align*}

After inverting the inequality, we get the result.
\end{proof}
\begin{rem}Observe that, if $x\ne y$, then $\partial_1^+(\partial_1^- d)(x,y)=-\partial_{11} d(x,y)$ and we have the inequality $\partial_{11} d(x,y)\leq -\partial_1^+(\partial_1^-d)(x,x)$. Furthermore, for the case $(x,y)\in X$, since $\partial_1^+(\partial_1^-d)(x,x))=\partial_2^+(\partial_1^-d)(x,x)$, $\partial_{11}d|_X(x,y)\leq \partial_{12}d|_X(x,x)$.
	\end{rem}

\section{Sufficient conditions}

In this section, we provide several sufficient results in order to ensure that a function $d:\mathbb{R}^2 \to \mathbb{R}$ defines a metric on $\mathbb{R}$. Of course, the hypotheses regarding the symmetry and sign of $d$ are obvious. Nevertheless, with respect to what hypotheses we can demand in order to obtain the triangle inequality, we make the following short discussion in this preamble.

We will assume $x<y<z$ without loss of generality, since the triangle inequality is evident in the case where at least two of these three numbers are equal (positive definiteness and symmetry are enough to conclude). It is important to have in mind these assumptions concerning the order of $x,y$ and $z$, since they will be used with high frequency along the whole document. In order to prove the triangle inequality, we need to show the following three inequalities:
\begin{equation}\label{ineqs}
\begin{aligned}
& \text{1. \quad} d(x,y)+d(y,z) \geq d(x,z), \\
& \text{2. \quad} d(x,z)+d(y,z) \geq d(x,y), \\
& \text{3. \quad} d(x,y)+d(x,z) \geq d(y,z).
\end{aligned}
\end{equation}

It is also important to realize that the nature of the first inequality is somehow distinct to the two other ones. The main reason is that the first inequality compares the ``distance'' from $x$ to $z$ with the sum of two ``distances'' pivoting via $y$. The fact that $y$ is the intermediate value in the order relation assumption $x<y<z$ plays a special role here. Nevertheless, the second and third inequalities are somehow symmetric, as we shall see along the proofs in this article.

In the first part of the section we will provide two versions of a useful lemma that, essentially, provides a sufficient condition for having the first inequality $d(x,y)+d(y,z) \geq d(x,z)$. In the second part, we will state and prove an initial version of these sufficiency theorems for $d$ being a distance. In each of these theorems, we add a different hypothesis that allows us to get $d(x,z)+d(y,z) \geq d(x,y)$ and $d(x,y)+d(x,z) \geq d(y,z)$. In the third part, we will weaken the assumption involving the smoothness of $d$. 

\subsection{The cross partial derivative and the triangle inequality}

We will provide two versions of a lemma that shows, roughly speaking, how a non-negative sign of the cross partial derivative outside of $\Delta$ implies $d(x,y)+d(y,z) \geq d(x,z)$ for any $x<y<z$. The main difference between these two versions is that $d$ is required to be continuous on $\mathbb{R}^2$ in Lemma~\ref{lem3}, but not in Lemma~\ref{lem4}. This extra assumption allows us to provide a simple proof of Lemma~\ref{lem3}, and a nice geometrical explanation of the issue via Figure~\ref{fig1}. Besides, this proof of Lemma~\ref{lem3} captures the essence of the idea that is needed to prove Lemma~\ref{lem4}. In this last case, the absence of continuity for $d$ forces us to make some technical considerations in our argumentation.

\begin{lem}\label{lem3}
Consider a function $d \in \mathcal{C}^2\left(\mathbb{R}^2 \backslash \Delta ,[0,\infty)\right) \cap \mathcal{C}\left(\mathbb{R}^2,[0,\infty)\right)$ fulfilling $\partial_{12} d (a,b) \geq 0$ for every $(a,b) \not \in \Delta$ and three given real numbers $x<y<z$. Then, we have the inequality $d(x,z) \leq d(x,y) + d(y,z)$.
\end{lem}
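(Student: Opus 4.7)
The plan is to express the quantity $d(x,y)+d(y,z)-d(x,z)-d(y,y)$ as a double integral of $\partial_{12}d$ over the rectangle $[x,y]\times[y,z]$. Since $d(y,y)=0$, non-negativity of this integral would immediately give the desired triangle-like inequality. The subtlety is that $d$ is only $\mathcal{C}^2$ away from the diagonal, and this rectangle touches $\Delta$ at its corner $(y,y)$, so I cannot apply the fundamental theorem of calculus directly on the whole rectangle.

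First I would observe that, because $x<y<z$, the closed rectangle $R:=[x,y]\times[y,z]$ meets $\Delta$ exactly at the single point $(y,y)$, hence $d$ is of class $\mathcal{C}^2$ on $R\setminus\{(y,y)\}$. To avoid this single bad point, for small $\epsilon>0$ I introduce the shrunk rectangle $R_\epsilon:=[x,y-\epsilon]\times[y+\epsilon,z]$, which sits entirely inside $\mathbb{R}^2\setminus\Delta$ and on which $d$ is therefore $\mathcal{C}^2$. Then, applying Fubini's theorem together with the fundamental theorem of calculus twice, I compute
\begin{align*}
\int_x^{y-\epsilon}\!\!\int_{y+\epsilon}^{z}\partial_{12}d(a,b)\,db\,da
&=\int_x^{y-\epsilon}\bigl[\partial_1 d(a,z)-\partial_1 d(a,y+\epsilon)\bigr]\,da\\
&=d(y-\epsilon,z)-d(x,z)-d(y-\epsilon,y+\epsilon)+d(x,y+\epsilon).
\end{align*}
By the hypothesis $\partial_{12}d\geq 0$ on $\mathbb{R}^2\setminus\Delta$, the left-hand side is non-negative for every admissible $\epsilon>0$.

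Finally, I would let $\epsilon\to 0^+$. The continuity of $d$ on all of $\mathbb{R}^2$ (including the diagonal) yields $d(y-\epsilon,z)\to d(y,z)$, $d(x,y+\epsilon)\to d(x,y)$, and $d(y-\epsilon,y+\epsilon)\to d(y,y)=0$. Passing to the limit in the chain of equalities and the inequality $\geq 0$ gives
\[d(x,y)+d(y,z)-d(x,z)\geq 0,\]
which is exactly the claim.

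The main obstacle is precisely the loss of $\mathcal{C}^2$ regularity at the corner $(y,y)$: one cannot write a single integral identity over $[x,y]\times[y,z]$ without justifying behaviour at this diagonal point. My plan sidesteps this by integrating over $R_\epsilon$ (which is safely away from $\Delta$) and then leveraging the global continuity of $d$ to take $\epsilon\to 0$. The geometric content of Figure~\ref{fig1} likely illustrates exactly this: the rectangle $R$ placed so that its corner sits on the diagonal, with the integrated cross partial derivative recovering the triangle inequality via the four corner evaluations of $d$.
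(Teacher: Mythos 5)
Your proposal is correct and is essentially the paper's own argument: integrating $\partial_{12}d\geq 0$ over the rectangle with corner at $(y,y)$ (cf.\ Figure~\ref{fig1}) is just the two-dimensional form of the paper's comparison of the line integrals $\int_y^z\partial_1 d(s,x)\,\mathrm{d}s$ and $\int_y^z\partial_1 d(s,y)\,\mathrm{d}s$, and your $\epsilon$-shrunk rectangle handles the corner singularity at $(y,y)$ more carefully than the paper's one-line computation does (it is the same device the paper later uses in Lemma~\ref{lem4}). One small point: $d(y,y)=0$ is not among the hypotheses of this lemma, but since the codomain is $[0,\infty)$ you have $d(y-\epsilon,y+\epsilon)\to d(y,y)\geq 0$, which still yields $d(x,y)+d(y,z)\geq d(x,z)+d(y,y)\geq d(x,z)$, so nothing is lost (and, unlike the paper's proof, you never need the symmetry of $d$).
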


\begin{proof}\ 

	\emph{Idea:} The key for the proof of this lemma is to derive a differential inequality from the sign condition $\partial_{12}d \geq 0$, that implies the desired result after integration and a direct application of Fundamental Theorem of Calculus. In geometrical terms, see Figure~\ref{fig1}, we observe how, due to the symmetry of $d$, the integral of $\partial_2 d$ on the vertical black segment coincides with the integral of $\partial_1 d$ on the horizontal black segment. Then, since $\partial_{12} d \geq 0$, we know that $\partial_1 d$ increases with respect to increments in the second variable. Consequently, the integral of $\partial_1 d$ on the black segment will be bounded from above by the corresponding integral on the gray segment. 

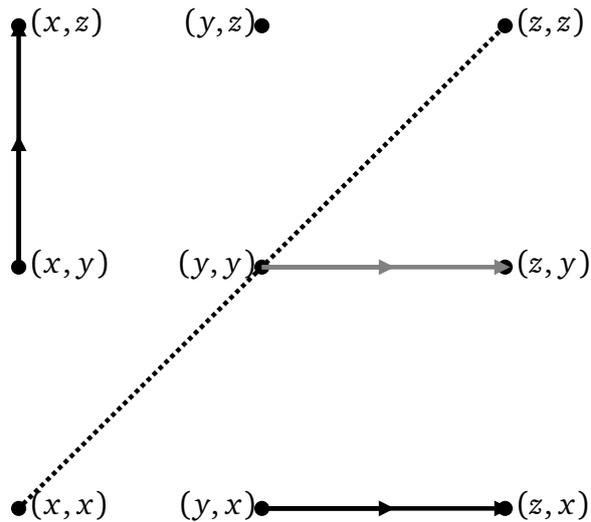
\begin{figure}[h]
	\centering
	\begin{tikzpicture}[x=.8mm, y=.8mm, inner xsep=0pt, inner ysep=0pt, outer xsep=0pt, outer ysep=0pt]
		\definecolor{F}{rgb}{0,0,0}
		\path[fill=F] (30.00,110.00) circle (1.00mm);
		\path[fill=F] (30.00,70.00) circle (1.00mm);
		\path[fill=F] (70.00,110.00) circle (1.00mm);
		\path[fill=F] (70.00,70.00) circle (1.00mm);
		\path[fill=F] (30.00,30.00) circle (1.00mm);
		\path[fill=F] (70.00,30.00) circle (1.00mm);
		\path[fill=F] (110.00,110.00) circle (1.00mm);
		\path[fill=F] (110.00,70.00) circle (1.00mm);
		\path[fill=F] (110.00,30.00) circle (1.00mm);
		\definecolor{T}{rgb}{0,0,0}
		\draw[T] (32.00,69.00) node[anchor=base west]{ {$(x,y)$}};
		\draw[T] (32.00,109.00) node[anchor=base west]{ {$(x,z)$}};
		\draw[T] (69.00,109.00) node[anchor=base east]{ {$(y,z)$}};
		\draw[T] (112.00,109.00) node[anchor=base west]{ {$(z,z)$}};
		\draw[T] (112.00,69.00) node[anchor=base west]{ {$(z,y)$}};
		\draw[T] (112.00,29.00) node[anchor=base west]{ {$(z,x)$}};
		\draw[T] (69.00,29.00) node[anchor=base east]{ {$(y,x)$}};
		\draw[T] (32.00,29.00) node[anchor=base west]{ {$(x,x)$}};
		\draw[T] (69.00,69.00) node[anchor=base east]{ {$(y,y)$}};
		\path[] (5.00,107.00) -- (-9.00,76.00);
		\definecolor{L}{rgb}{0,0,0}
		\path[line width=0.60mm, draw=L, dash pattern=on 0.60mm off 0.50mm] (110.00,110.00) -- (30.00,30.00);
		\definecolor{L}{rgb}{0.502,0.502,0.502}
		\path[line width=0.60mm, draw=L] (110.00,70.00) -- (70.00,70.00);
		\definecolor{F}{rgb}{0.502,0.502,0.502}
		\path[line width=0.60mm, draw=L, fill=F] (110.00,70.00) -- (108.60,70.70) -- (108.60,69.30) -- (110.00,70.00) -- cycle;
		\path[line width=0.60mm, draw=L] (91.00,70.00) -- (70.00,70.00);
		\path[line width=0.60mm, draw=L, fill=F] (91.00,70.00) -- (89.60,70.70) -- (89.60,69.30) -- (91.00,70.00) -- cycle;
		\definecolor{L}{rgb}{0,0,0}
		\path[line width=0.60mm, draw=L] (91.00,30.00) -- (70.00,30.00);
		\definecolor{F}{rgb}{0,0,0}
		\path[line width=0.60mm, draw=L, fill=F] (91.00,30.00) -- (89.60,30.70) -- (89.60,29.30) -- (91.00,30.00) -- cycle;
		\path[line width=0.60mm, draw=L] (110.00,30.00) -- (70.00,30.00);
		\path[line width=0.60mm, draw=L, fill=F] (110.00,30.00) -- (108.60,30.70) -- (108.60,29.30) -- (110.00,30.00) -- cycle;
		\path[line width=0.60mm, draw=L] (30.00,91.00) -- (30.00,70.00);
		\path[line width=0.60mm, draw=L, fill=F] (30.00,91.00) -- (29.30,89.60) -- (30.70,89.60) -- (30.00,91.00) -- cycle;
		\path[line width=0.60mm, draw=L] (30.00,110.00) -- (30.00,70.00);
		\path[line width=0.60mm, draw=L, fill=F] (30.00,110.00) -- (29.30,108.60) -- (30.70,108.60) -- (30.00,110.00) -- cycle;
	\end{tikzpicture}
	\caption{Representation of the different integration paths that lead to the inequality $d(x,y)+d(y,z) \geq d(x,z)$.}
	\label{fig1}
\end{figure}

From the calculus point of view, the proof for Lemma~\ref{lem3} is straightforward \[ d(x,z)-d(x,y)=d(z,x)-d(y,x)=\int_y^z \partial_1 d(s,x) \operatorname{d} s \leq \int_y^z \partial_1 d(s,y) \operatorname{d} s = d(z,y) = d(y,z),\]  since $\partial_1 d$ is increasing with respect to the second variable due to the condition $\partial_{12} d (a,b) \geq 0$.
\end{proof}

Now, we state a stronger version for the previous lemma, where we drop out the hypothesis regarding the continuity assumption for $d$. This generalization is relevant, since many renowned examples of distances on $\mathbb{R}$ are not induced by continuous functions, as we shall see in the last part of the paper. Observe that this loss of continuity on $\Delta$, and specifically at the point $(y,y)$, impedes us to use Fundamental Theorem of Calculus to claim that $\int_y^z \partial_1 d(s,y) \operatorname{d} s=d(z,y)-d(y,y)$. Hence, the technique consists in making a valid limit argument that does not need the continuity of $d$. Nevertheless, the main idea of this proof is, essentially, the same one as in Lemma~\ref{lem3}. In order to synthesize the argument, it will be convenient to establish a notation for certain functions, and to prove some properties regarding their monotonicity.

\begin{lem}\label{lem4}
Consider a function $d \in \mathcal{C}^2\left(\mathbb{R}^2 \backslash \Delta ,[0,\infty)\right)$ fulfilling the assumption $\partial_{12} d (a,b) \geq 0$ for every $(a,b) \not \in \Delta$ and three given real numbers $x<y<z$. Then, the function \[ G_{y,H}^z(\lambda)\coloneqq\int_y^z \partial_1 d(s,\lambda) \operatorname{d} s\]  is increasing on the intervals $(-\infty,y]$ and $[z,+\infty)$ and the function \[ G_{x,V}^y(\lambda)\coloneqq\int_x^y \partial_2 d(\lambda,s) \operatorname{d} s\]  is increasing on the intervals $(-\infty,x]$ and $[y,+\infty)$. As a consequence, we have the inequality $d(x,z) \leq d(x,y) + d(y,z)$.
\end{lem}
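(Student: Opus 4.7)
The plan is to first establish the two monotonicity statements directly from the hypothesis $\partial_{12}d \geq 0$ off $\Delta$, and then to leverage the monotonicity of $G_{y,H}^{z}$ on $(-\infty,y]$ via an $\varepsilon$-shift that sidesteps evaluating $d$ at the point $(y,y)$, where continuity is not assumed.

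For the monotonicity of $G_{y,H}^{z}$, I take $\lambda_1<\lambda_2\leq y$. For each $t\in[y,z]$, the segment $\{(t,\lambda):\lambda\in[\lambda_1,\lambda_2]\}$ lies in $\mathbb{R}^2\setminus\Delta$ (since $\lambda\leq y\leq t$, with coincidence only at the single point $(y,y)$), so the regularity of $d$ combined with the fundamental theorem of calculus gives
\[
\partial_1 d(t,\lambda_2)-\partial_1 d(t,\lambda_1)=\int_{\lambda_1}^{\lambda_2}\partial_{12}d(t,\lambda)\,\operatorname{d}\lambda\geq 0.
\]
Integrating in $t\in[y,z]$ yields $G_{y,H}^{z}(\lambda_1)\leq G_{y,H}^{z}(\lambda_2)$. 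The case $z\leq\lambda_1<\lambda_2$ is identical, and the endpoint value $G_{y,H}^{z}(y)$ (a priori an improper integral) is recovered as the monotone limit $\lim_{\lambda\to y^-}G_{y,H}^{z}(\lambda)$. The monotonicity of $G_{x,V}^{y}$ on $(-\infty,x]$ and $[y,+\infty)$ follows by the same argument applied to horizontal strips, exploiting that $\partial_2 d(\lambda,s)=\partial_1 d(s,\lambda)$ by symmetry.

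To obtain the triangle inequality, I first compute, via FTC applied to $t\mapsto d(x,t)$ on $[y,z]$ (where $(x,t)\notin\Delta$ because $x<y\leq t$),
\[
d(x,z)-d(x,y)=\int_y^z \partial_2 d(x,t)\,\operatorname{d}t=\int_y^z \partial_1 d(t,x)\,\operatorname{d}t=G_{y,H}^{z}(x).
\]
Fix now $\varepsilon\in(0,y-x)$. By the monotonicity on $(-\infty,y]$, $G_{y,H}^{z}(x)\leq G_{y,H}^{z}(y-\varepsilon)$, and since the path $\{(t,y-\varepsilon):t\in[y,z]\}$ stays off $\Delta$, FTC applies again to give
\[
G_{y,H}^{z}(y-\varepsilon)=d(z,y-\varepsilon)-d(y,y-\varepsilon)\leq d(z,y-\varepsilon),
\]
where the last bound uses only $d\geq 0$. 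Letting $\varepsilon\to 0^+$, continuity of $d$ at the off-diagonal point $(z,y)$ yields $d(z,y-\varepsilon)\to d(z,y)=d(y,z)$, and combining the above chain gives $d(x,z)-d(x,y)\leq d(y,z)$, as required.

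The principal obstacle, absent in Lemma~\ref{lem3}, is precisely that $d$ need not be continuous at $(y,y)$, so one cannot apply FTC directly on any path that passes through the diagonal. The resolution is the twofold use of the freedom granted by the monotonicity: it allows one to ``push'' the second argument away from $y$ into the regular region, while the nonnegativity of $d$ absorbs the possibly ill-behaved diagonal term $d(y,y-\varepsilon)$ without ever requiring it to vanish in the limit.
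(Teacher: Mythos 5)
Your proof is correct and follows essentially the same route as the paper's: monotonicity of $G_{y,H}^{z}$ is obtained by integrating $\partial_{12}d\geq 0$ over rectangles disjoint from $\Delta$, and the triangle inequality is then derived by applying the fundamental theorem of calculus along horizontal segments at height $y-\varepsilon$, discarding the term $d(y,y-\varepsilon)$ by nonnegativity, and letting $\varepsilon\to 0^{+}$ using continuity of $d$ at the off-diagonal point $(z,y)$ — exactly the paper's limit $\widetilde{\lambda}\to y$. The only cosmetic difference is that you compute $d(x,z)-d(x,y)$ via a vertical path and symmetry rather than directly as $G_{y,H}^{z}(x)=d(z,x)-d(y,x)$, which is immaterial.
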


\begin{proof}
The key remark is that $\partial_1 d(s,\lambda) \leq \partial_1 d(s,\widetilde{\lambda})$ for any $s \in (y,z)$ and any $\lambda<\widetilde{\lambda}\leq y$, since $\partial_{12} d \geq 0$ on $R= (y,z) \times [\lambda,\widetilde{\lambda}] \subset (y,z) \times (-\infty,y]$ because $R \cap \Delta = \emptyset$. Therefore, $G_{y,H}^z(\lambda)$ is increasing on $(-\infty,y]$ and, analogously, it is also increasing on $[z,\infty)$. A similar argument applies in order to show the increasing character of $G_{x,V}^y(\lambda)$ on the intervals $(-\infty,x]$ and $[y,+\infty)$.

For the final claim, if we observe that $d$ is two times differentiable at any point of the closure $\overline{R}$ except at $(y,y)$, we can apply Fundamental Theorem of Calculus to deduce \[ d(z,\lambda)-d(y,\lambda) \leq d(z,\widetilde{\lambda})-d(y,\widetilde{\lambda}){\leq d(z,\widetilde{\lambda})},\]  for any $\lambda < \widetilde{\lambda} <y$. If we take $\lambda=x$, and let $\widetilde{\lambda} \to y$, the continuity of $d$ outside the diagonal together will imply \[ d(z,x)-d(y,x) \leq d(z,y),\] which is obviously equivalent to the desired inequality.
\end{proof}

\subsection{Initial statements for sufficient conditions}

We now state four sufficiency theorems, each of them implying that $d$ defines a distance under certain hypotheses, together with their corresponding proofs.

\begin{thm}\label{t_1} Consider a function $d \in \mathcal{C}^2\left(\mathbb{R}^2 \backslash \Delta ,[0,\infty)\right)$. Suppose that $d$ fulfills the following properties:
\begin{enumerate}
\item[H1.] $d(x,y) > 0$ for every $(x,y) \not \in \Delta$, and $d(x,x) =0$ for every $x \in \mathbb{R}$.
\item[H2.] $d(x,y) = d(y,x)$ for every $(x,y) \in \mathbb{R}^2$.
\item[H3.] $\partial_{12} d (x,y) \geq 0$ for every $(x,y) \not \in \Delta$.
\item[H4A.] For any fixed $a \in \mathbb{R}$, the function $d(\cdot, a)$ is non-increasing on the interval $(-\infty,a)$ and non-decreasing on the interval $(a, \infty)$.
\end{enumerate}
Then, $d$ defines a distance on $\mathbb{R}$.
\end{thm}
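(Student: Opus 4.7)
Positive definiteness is H1 and symmetry is H2, so the only work is the triangle inequality. I will argue that it suffices to verify the three inequalities in \eqref{ineqs} for arbitrary $x<y<z$, since the case in which two of the three points coincide is immediate from H1 and H2. The first inequality of \eqref{ineqs} is precisely the output of Lemma~\ref{lem4}, whose hypotheses are met verbatim: $d \in \mathcal{C}^2(\mathbb{R}^2\setminus\Delta,[0,\infty))$ and $\partial_{12}d \geq 0$ off the diagonal, i.e.\ H3. So the content of Theorem~\ref{t_1} beyond Lemma~\ref{lem4} is really only the two remaining inequalities, and here the monotonicity hypothesis H4A does essentially all the work.

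For the second inequality $d(x,y)\leq d(x,z)+d(y,z)$, apply H4A with $a=x$: the function $d(\cdot,x)$ is non-decreasing on $(x,\infty)$, so from $x<y<z$ and symmetry I get $d(x,y)=d(y,x)\leq d(z,x)=d(x,z)$. Adding the non-negative quantity $d(y,z)$ to the right-hand side yields the desired bound. For the third inequality $d(y,z)\leq d(x,y)+d(x,z)$, apply H4A with $a=z$: the function $d(\cdot,z)$ is non-increasing on $(-\infty,z)$, so from $x<y<z$ I get $d(y,z)\leq d(x,z)$, and adding the non-negative $d(x,y)$ on the right finishes the argument.

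\textbf{Main obstacle.} There is no real obstacle, because the genuinely analytic content (the ``first'' inequality, which mixes the three distances in a nontrivial way) has already been packaged into Lemma~\ref{lem4} via the $\partial_{12}d\geq 0$ hypothesis. The remaining inequalities are degenerate in the sense highlighted in the preamble: H4A tells us that the side $[x,z]$ already dominates both $[x,y]$ and $[y,z]$, so the triangle inequalities (2) and (3) become trivial strengthenings of single-term monotonicity comparisons. Hence the proof is a short three-step verification: (i) reduce to $x<y<z$, (ii) invoke Lemma~\ref{lem4} for inequality~(1), (iii) invoke H4A twice (once at $a=x$, once at $a=z$) for inequalities~(2) and (3).
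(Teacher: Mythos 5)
Your proof is correct and follows essentially the same route as the paper's: reduce to $x<y<z$, obtain inequality (1) of \eqref{ineqs} from Lemma~\ref{lem4} via H3, and derive inequalities (2) and (3) from H4A (plus symmetry and non-negativity) through the single-term comparisons $d(x,y)\leq d(x,z)$ and $d(y,z)\leq d(x,z)$. Your write-up just makes explicit the two instances of H4A ($a=x$ and $a=z$) that the paper leaves implicit.
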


\begin{proof}
It is clear that we only need to check the triangle inequality for $x<y<z$. Besides, due to hypothesis H2, we observe that hypothesis H4A implies that $d(a,\cdot)$ is non-increasing on the interval $(-\infty,a)$ and non-decreasing on the interval $(a, \infty)$. On the one hand, $d(x,y)+d(y,z) \geq d(x,z)$ is a straightforward consequence of Lemma~\ref{lem4} due to hypothesis H3. On the other hand, derivation for the two last inequalities in~\eqref{ineqs} is immediate from H4A since distances are non negative, $d(x,z) \geq d(x,y)$, and $d(x,z) \geq d(y,z)$.
\end{proof}

\begin{thm}\label{t_2} Consider a function $d \in \mathcal{C}^2\left(\mathbb{R}^2 \backslash \Delta ,[0,\infty)\right)$ fulfilling the following properties:
\begin{enumerate}
\item[H1.] $d(x,y) > 0$ for every $(x,y) \not \in \Delta$, and $d(x,x) =0$ for every $x \in \mathbb{R}$.
\item[H2.] $d(x,y) = d(y,x)$ for every $(x,y) \in \mathbb{R}^2$.
\item[H3.]$\partial_{12} d (x,y) \geq 0$ for every $(x,y) \not \in \Delta$.
\item[H4B.] $\lim_{\lambda \to +\infty} [d(b,\lambda)-d(a,\lambda) ]\leq \lim_{\lambda \to -\infty} [d(b,\lambda)-d(a,\lambda)]$ for every pair $(a,b)$ with $a<b$, where both limits are finite.
\end{enumerate}
Then, $d$ defines a distance on $\mathbb{R}$.
\end{thm}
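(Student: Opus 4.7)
The plan is to reduce to the triangle inequality on a triple $x<y<z$ and then verify each of the three inequalities in~\eqref{ineqs}, leveraging Lemma~\ref{lem4} together with the new hypothesis H4B. For $a<b$ I would introduce the auxiliary function
\[
\phi_{a,b}(\lambda):=d(b,\lambda)-d(a,\lambda),
\]
defined whenever $\lambda\notin\{a,b\}$. The proof of Lemma~\ref{lem4} (rephrased in this notation) tells us that $\phi_{a,b}$ is non-decreasing on each of the intervals $(-\infty,a)$ and $(b,+\infty)$, and this monotonicity is what will be combined with H4B.

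The first inequality $d(x,y)+d(y,z)\geq d(x,z)$ is exactly the conclusion of Lemma~\ref{lem4} applied to $x<y<z$, so it comes for free. For the second one, $d(x,y)\leq d(x,z)+d(y,z)$, I rewrite it as $\phi_{y,z}(x)\geq -d(y,z)$ and build the following chain: by monotonicity of $\phi_{y,z}$ on $(-\infty,y)$ we have $\phi_{y,z}(x)\geq \lim_{\lambda\to -\infty}\phi_{y,z}(\lambda)$; by H4B with $a=y$, $b=z$ this is at least $\lim_{\lambda\to +\infty}\phi_{y,z}(\lambda)$; finally, Lemma~\ref{lem4} applied to the triple $y<z<\lambda$ gives $d(y,\lambda)\leq d(y,z)+d(z,\lambda)$, i.e.\ $\phi_{y,z}(\lambda)\geq -d(y,z)$ for every $\lambda>z$, and passing to the limit at $+\infty$ closes the chain.

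The third inequality $d(y,z)\leq d(x,y)+d(x,z)$ is handled symmetrically by pivoting on the pair $(x,y)$ instead of $(y,z)$. I rewrite it as $\phi_{x,y}(z)\leq d(x,y)$. Using monotonicity of $\phi_{x,y}$ on $(y,+\infty)$ gives $\phi_{x,y}(z)\leq\lim_{\lambda\to+\infty}\phi_{x,y}(\lambda)$; H4B applied to $a=x$, $b=y$ yields $\lim_{\lambda\to+\infty}\phi_{x,y}(\lambda)\leq\lim_{\lambda\to-\infty}\phi_{x,y}(\lambda)$; and a second application of Lemma~\ref{lem4} to the triple $\lambda<x<y$ gives $d(\lambda,y)\leq d(\lambda,x)+d(x,y)$, i.e.\ $\phi_{x,y}(\lambda)\leq d(x,y)$ for each such $\lambda$, so the limit at $-\infty$ is also bounded by $d(x,y)$.

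The point where I expect to pay the most care is the bookkeeping at the endpoints $a,b$: since $d$ is not assumed continuous across $\Delta$, the function $\phi_{a,b}$ cannot be evaluated at those points, so I must run all monotonicity arguments on the open half-lines $(-\infty,a)$ and $(b,+\infty)$ where Lemma~\ref{lem4} guarantees that $\phi_{a,b}$ is well defined and non-decreasing. The finiteness of the two limits at $\pm\infty$ is part of H4B, so no extra hypothesis is needed, and the two chains above produce the remaining inequalities of~\eqref{ineqs}, completing the proof.
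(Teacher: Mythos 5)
Your proof is correct and follows essentially the same route as the paper: reduce to $x<y<z$, get the first inequality from Lemma~\ref{lem4}, and obtain the other two by combining the monotonicity of $\lambda\mapsto d(b,\lambda)-d(a,\lambda)$ on the two half-lines with H4B to ``pass through infinity.'' The only (harmless) variation is that you bound the limits at $\pm\infty$ by applying the already-established first inequality to auxiliary triples, whereas the paper evaluates $G_{y,H}^z$ and $G_{x,V}^y$ at the endpoints $x$ and $z$ directly; both close the same chain of inequalities.
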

\emph{Idea.} 
In geometrical terms --see Figure~\ref{fig2}, left-- if we consider a horizontal segment from $(y,\lambda)$ to $(z,\lambda)$, the integral of $\partial_1 d$ along the oriented segment increases when the height $\lambda$ increases, with the only caution that the segment cannot cut $\Delta$. So, instead of cutting $\Delta$, the idea consists in ``passing through infinity'', as it will be explained in the next paragraph. The geometrical explanation for Figure~\ref{fig2}, right, is the same one, but changing vertical and horizontal roles.
In terms of Figure~\ref{fig2}, we want to prove that the integral of the horizontal/vertical partial derivative of $d$ along the gray oriented segment is greater than the analogous one on the black oriented segment.
\begin{figure}[h]
	\centering
	\begin{tikzpicture}[x=0.80mm, y=0.80mm, inner xsep=0pt, inner ysep=0pt, outer xsep=0pt, outer ysep=0pt]
		\definecolor{F}{rgb}{0,0,0}
		\path[fill=F] (30.00,110.00) circle (1.00mm);
		\path[fill=F] (30.00,70.00) circle (1.00mm);
		\path[fill=F] (70.00,110.00) circle (1.00mm);
		\path[fill=F] (70.00,70.00) circle (1.00mm);
		\path[fill=F] (30.00,30.00) circle (1.00mm);
		\path[fill=F] (70.00,30.00) circle (1.00mm);
		\path[fill=F] (110.00,110.00) circle (1.00mm);
		\path[fill=F] (110.00,70.00) circle (1.00mm);
		\path[fill=F] (110.00,30.00) circle (1.00mm);
		\definecolor{T}{rgb}{0,0,0}
		\draw[T] (32.00,69.00) node[anchor=base west]{ {$(x,y)$}};
		\draw[T] (32.00,109.00) node[anchor=base west]{ {$(x,z)$}};
		\draw[T] (69.00,109.00) node[anchor=base east]{ {$(y,z)$}};
		\draw[T] (112.00,109.00) node[anchor=base west]{ {$(z,z)$}};
		\draw[T] (112.00,69.00) node[anchor=base west]{ {$(z,y)$}};
		\draw[T] (112.00,29.00) node[anchor=base west]{ {$(z,x)$}};
		\draw[T] (69.00,29.00) node[anchor=base east]{ {$(y,x)$}};
		\draw[T] (32.00,29.00) node[anchor=base west]{ {$(x,x)$}};
		\draw[T] (69.00,69.00) node[anchor=base east]{ {$(y,y)$}};
		\definecolor{L}{rgb}{0,0,0}
		\path[line width=0.60mm, draw=L, dash pattern=on 0.60mm off 0.50mm] (110.00,110.00) -- (30.00,30.00);
		\definecolor{L}{rgb}{0,0,0}
		\path[line width=0.60mm, draw=L] (110.00,110.00) -- (70.00,110.00);
		\definecolor{F}{rgb}{0,0,0}
		\path[line width=0.60mm, draw=L, fill=F] (110.00,110.00) -- (108.60,110.70) -- (108.60,109.30) -- (110.00,110.00) -- cycle;
		\path[line width=0.60mm, draw=L] (91.00,110.00) -- (70.00,110.00);
		\path[line width=0.60mm, draw=L, fill=F] (91.00,110.00) -- (89.60,110.70) -- (89.60,109.30) -- (91.00,110.00) -- cycle;
		\definecolor{L}{rgb}{0.502,0.502,0.502}
		\path[line width=0.60mm, draw=L] (91.00,30.00) -- (70.00,30.00);
		\definecolor{F}{rgb}{0.502,0.502,0.502}
		\path[line width=0.60mm, draw=L, fill=F] (91.00,30.00) -- (89.60,30.70) -- (89.60,29.30) -- (91.00,30.00) -- cycle;
		\path[line width=0.60mm, draw=L] (110.00,30.00) -- (70.00,30.00);
		\path[line width=0.60mm, draw=L, fill=F] (110.00,30.00) -- (108.60,30.70) -- (108.60,29.30) -- (110.00,30.00) -- cycle;
	\end{tikzpicture}\qquad
	\begin{tikzpicture}[x=0.80mm, y=0.80mm, inner xsep=0pt, inner ysep=0pt, outer xsep=0pt, outer ysep=0pt]
		\definecolor{F}{rgb}{0,0,0}
		\path[fill=F] (30.00,110.00) circle (1.00mm);
		\path[fill=F] (30.00,70.00) circle (1.00mm);
		\path[fill=F] (70.00,110.00) circle (1.00mm);
		\path[fill=F] (70.00,70.00) circle (1.00mm);
		\path[fill=F] (30.00,30.00) circle (1.00mm);
		\path[fill=F] (70.00,30.00) circle (1.00mm);
		\path[fill=F] (110.00,110.00) circle (1.00mm);
		\path[fill=F] (110.00,70.00) circle (1.00mm);
		\path[fill=F] (110.00,30.00) circle (1.00mm);
		\definecolor{T}{rgb}{0,0,0}
		\draw[T] (32.00,69.00) node[anchor=base west]{ {$(x,y)$}};
		\draw[T] (32.00,109.00) node[anchor=base west]{ {$(x,z)$}};
		\draw[T] (69.00,109.00) node[anchor=base east]{ {$(y,z)$}};
		\draw[T] (112.00,109.00) node[anchor=base west]{ {$(z,z)$}};
		\draw[T] (112.00,69.00) node[anchor=base west]{ {$(z,y)$}};
		\draw[T] (112.00,29.00) node[anchor=base west]{ {$(z,x)$}};
		\draw[T] (69.00,29.00) node[anchor=base east]{ {$(y,x)$}};
		\draw[T] (32.00,29.00) node[anchor=base west]{ {$(x,x)$}};
		\draw[T] (69.00,69.00) node[anchor=base east]{ {$(y,y)$}};
		\definecolor{L}{rgb}{0,0,0}
		\path[line width=0.60mm, draw=L, dash pattern=on 0.60mm off 0.50mm] (110.00,110.00) -- (30.00,30.00);
		\path[line width=0.60mm, draw=L] (110.00,49.00) -- (110.00,70.00);
		\path[line width=0.60mm, draw=L, fill=F] (110.00,51.00) -- (109.30,49.60) -- (110.70,49.60) -- (110.00,51.00) -- cycle;
		\path[line width=0.60mm, draw=L] (110.00,70.00) -- (110.00,30.00);
		\path[line width=0.60mm, draw=L, fill=F] (110.00,70.00) -- (109.30,68.60) -- (110.70,68.60) -- (110.00,70.00) -- cycle;
		\definecolor{L}{rgb}{0.502,0.502,0.502}
		\path[line width=0.60mm, draw=L] (30.00,51.00) -- (30.00,30.00);
		\definecolor{F}{rgb}{0.502,0.502,0.502}
		\path[line width=0.60mm, draw=L, fill=F] (30.00,51.00) -- (29.30,49.60) -- (30.70,49.60) -- (30.00,51.00) -- cycle;
		\path[line width=0.60mm, draw=L] (30.00,70.00) -- (30.00,30.00);
		\path[line width=0.60mm, draw=L, fill=F] (30.00,70.00) -- (29.30,68.60) -- (30.70,68.60) -- (30.00,70.00) -- cycle;
	\end{tikzpicture}
	\caption{Comparison of the horizontal (left) and vertical (right) segments.}
	\label{fig2}
\end{figure}
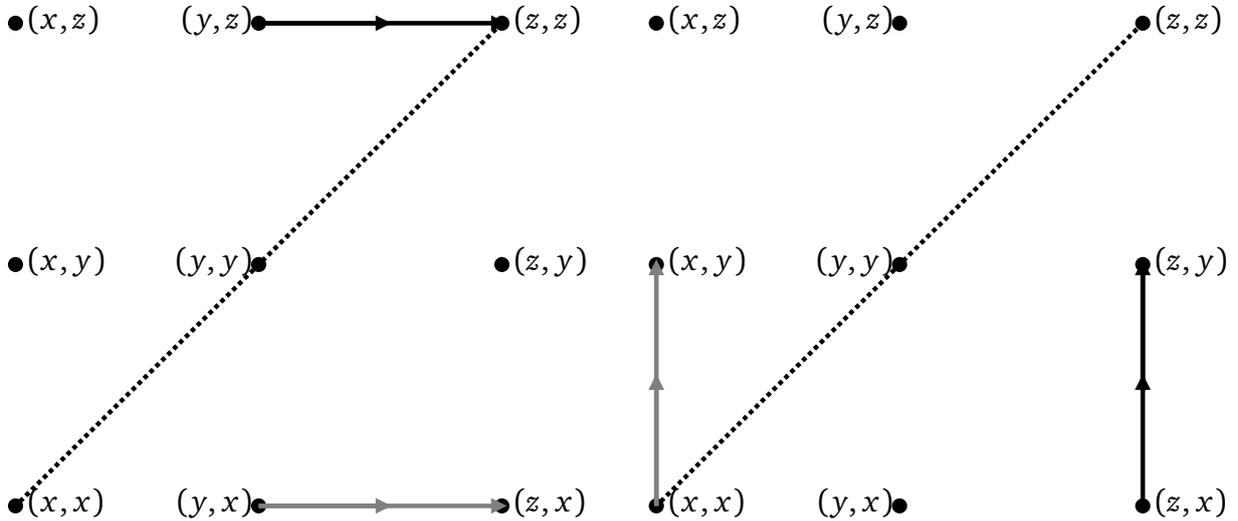

\begin{proof}
As in the previous result, from the first three hypotheses we derive the non-negativity, symmetry, and 1 in~\eqref{ineqs}. Thus, if $x<y<z$, it suffices to see that
\begin{itemize}
\item $d(x,z)+d(y,z) \geq d(x,y)$,
\item $d(x,y)+d(x,z) \geq d(y,z)$.
\end{itemize}
First, if we recall the definition $G_{y,H}^z(\lambda)\coloneqq\int_y^z \partial_1 d(s,\lambda) \operatorname{d} s$ made in Lemma~\ref{lem4}, we know that $G_{y,H}^z$ increases on $(-\infty,y]$ and $[z,\infty)$. Besides, since $d$ is smooth enough outside of $\Delta$, we can apply the Fundamental Theorem of Calculus to the integral defined by $G_{y,H}^z(\lambda)$ for $\lambda \in (-\infty,y) \cup (z,\infty)$. In particular, we are interested in the cases where $\lambda \to \infty$ or $\lambda \to -\infty$, since H4B can be stated as
\begin{equation} \label{eq1}
\lim_{\lambda \to +\infty} G_{y,H}^z(\lambda) \leq \lim_{\lambda \to -\infty} G_{y,H}^z(\lambda),
\end{equation}
implying $G_{y,H}^z(x) \geq G_{y,H}^z(z)$. Analogously, if we recall the definition $G_{x,V}^y(\lambda):=\int_x^y \partial_2 d(\lambda,s) \operatorname{d} s$, due to the symmetry of $d$, we immediately derive
\begin{equation} \label{eq2}
\lim_{\lambda \to +\infty} G_{x,V}^y(\lambda) \leq \lim_{\lambda \to -\infty} G_{x,V}^y(\lambda),
\end{equation}
implying $G_{x,V}^y(x) \geq G_{x,V}^y(z)$. We simply observe that
\begin{align*}
G_{y,H}^z(x) \geq G_{y,H}^z(z) \Leftrightarrow d(z,x)-d(y,x) \geq -d(y,z) \Leftrightarrow & d(x,z)+d(y,z) \geq d(x,y),\\
G_{x,V}^y(x) \geq G_{x,V}^y(z) \Leftrightarrow d(x,y) \geq d(z,y)-d(z,x) \Leftrightarrow & d(x,y)+d(x,z) \geq d(y,z),
\end{align*}
and we are finished.
\end{proof}

\begin{thm}\label{t_3} Consider a function $d \in \mathcal{C}^2\left(\mathbb{R}^2 \backslash \Delta ,[0,\infty)\right)$ fulfilling the following properties:
\begin{enumerate}
\item[H1.] $d(x,y) > 0$ for every $(x,y) \not \in \Delta$, and $d(x,x) =0$ for every $x \in \mathbb{R}$.
\item[H2.] $d(x,y) = d(y,x)$ for every $(x,y) \in \mathbb{R}^2$.
\item[H3.] $\partial_{12} d(x,y) \geq 0$ for every $(x,y) \not \in \Delta$.
\item[H4C.] $\nabla d(x,y) \to 0$ when $(x,y) \to \infty$.
\end{enumerate}
Then, $d$ defines a distance on $\mathbb{R}$.
\end{thm}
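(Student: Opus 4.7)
The plan is to show that H4C is a strictly stronger hypothesis than H4B—indeed, it forces both limits appearing in H4B to equal zero—so that Theorem~\ref{t_2} applies directly and nothing essentially new needs to be proved.

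As in the proofs of Theorems~\ref{t_1} and~\ref{t_2}, hypotheses H1 and H2 give positive definiteness and symmetry, while H3 combined with Lemma~\ref{lem4} yields the first inequality in~\eqref{ineqs} for any $x<y<z$. So the only task that remains is to deduce the other two inequalities in~\eqref{ineqs}, and for that I will verify H4B directly from H4C and then invoke Theorem~\ref{t_2}.

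Given $a<b$, I apply the Fundamental Theorem of Calculus along the horizontal segment from $(a,\lambda)$ to $(b,\lambda)$, which avoids $\Delta$ whenever $\lambda\notin[a,b]$, to obtain
\[ d(b,\lambda)-d(a,\lambda)=\int_a^b \partial_1 d(s,\lambda)\operatorname{d} s.\]
Since $[a,b]$ is bounded, for any $\varepsilon>0$ one can choose $R>0$ so that $\left\lVert(s,\lambda)\right\rVert>R$ for every $s\in[a,b]$ as soon as $|\lambda|$ is sufficiently large. Invoking H4C then gives $|\partial_1 d(s,\lambda)|<\varepsilon$ uniformly in $s\in[a,b]$, whence $|d(b,\lambda)-d(a,\lambda)|\leq(b-a)\varepsilon$. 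Consequently, both $\lim_{\lambda\to+\infty}[d(b,\lambda)-d(a,\lambda)]$ and $\lim_{\lambda\to-\infty}[d(b,\lambda)-d(a,\lambda)]$ exist and equal $0$; in particular, H4B holds trivially with equality, and Theorem~\ref{t_2} concludes. The only mildly delicate point is that the convergence $\partial_1 d(s,\lambda)\to 0$ be \emph{uniform} in $s\in[a,b]$ as $|\lambda|\to\infty$, but once one reads ``$(x,y)\to\infty$'' in H4C as $\left\lVert(x,y)\right\rVert\to\infty$, this uniformity is automatic, because a bounded set of $s$-values becomes negligible compared to a sufficiently large $|\lambda|$.
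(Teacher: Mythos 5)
Your proof is correct and follows essentially the same route as the paper: both derive H4B from H4C by noting that the gradient is uniformly small outside a large square, so that $\int_a^b \partial_1 d(s,\lambda)\operatorname{d}s$ tends to $0$ as $\lvert\lambda\rvert\to\infty$, making both limits in H4B equal to zero, and then conclude via Theorem~\ref{t_2}. Your explicit remark about the uniformity of the convergence over $s\in[a,b]$ is a welcome clarification of a point the paper passes over quickly.
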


\begin{proof} In order to prove this theorem, it suffices to see how H4B is derived from H4C. Since the gradient tends to zero, for any given $\varepsilon >0$, it is possible to make $\Vert \nabla d \Vert < \varepsilon$ outside of a big enough square $[-l,l] \times [-l,l]$. Hence, after considering any $\lambda$ such that $\vert \lambda \vert > l$, we have that $\vert G_{a,H}^b (\lambda) \vert \leq \varepsilon \cdot (b-a)$. Thus, H4B would be automatically fulfilled, since it would read $0 \leq 0$.
\end{proof}

\begin{thm}\label{t_4} Consider a function $d \in \mathcal{C}^2\left(\mathbb{R}^2 \backslash \Delta ,[0,\infty)\right)$ fulfilling the following properties:
\begin{enumerate}
\item[H1.] $d(x,y) > 0$ for every $(x,y) \not \in \Delta$, and $d(x,x) =0$ for every $x \in \mathbb{R}$.
\item[H2.] $d(x,y) = d(y,x)$ for every $(x,y) \in \mathbb{R}^2$.
\item[H3.] $\partial_{12} d (x,y) \geq 0$ for every $(x,y) \not \in \Delta$.
\item[H4D.] We have that $\lim_{\lambda \to -\infty} d(c,\lambda) = \lim_{\lambda \to \infty} d(c,\lambda)\in{\mathbb R}$ for any $c \in \mathbb{R}$.
\end{enumerate}
Then, $d$ defines a distance on $\mathbb{R}$.
\end{thm}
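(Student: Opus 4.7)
The plan is to reduce Theorem~\ref{t_4} to Theorem~\ref{t_2} by verifying that hypothesis H4D implies hypothesis H4B; then the conclusion follows immediately, since H1, H2, H3 are shared between the two statements.

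Concretely, the first step is to observe that H4D provides, for each $c\in\mathbb R$, a common finite value
\[ L(c):=\lim_{\lambda\to-\infty}d(c,\lambda)=\lim_{\lambda\to+\infty}d(c,\lambda)\in\mathbb R. \]
Fix $a<b$. Since $L(a)$ and $L(b)$ both exist as finite real numbers, standard limit arithmetic yields
\[ \lim_{\lambda\to+\infty}[d(b,\lambda)-d(a,\lambda)] = L(b)-L(a) = \lim_{\lambda\to-\infty}[d(b,\lambda)-d(a,\lambda)], \]
both limits being finite. In particular, the inequality demanded in H4B holds trivially, with equality.

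Having established H4B, the second (and final) step is simply to invoke Theorem~\ref{t_2}: under H1, H2, H3, H4B, the function $d$ defines a distance on $\mathbb{R}$. There is no substantive obstacle here; the entire content of the theorem is the remark that the symmetry-at-infinity condition H4D is a stronger (and often easier to check) replacement for the asymmetric asymptotic inequality H4B, and the proof is essentially a one-line reduction.
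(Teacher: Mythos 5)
Your proposal is correct and follows exactly the paper's route: the paper also proves Theorem~\ref{t_4} by observing that H4D implies H4B and then invoking Theorem~\ref{t_2}, merely calling the implication ``obvious'' where you spell out the limit arithmetic. Nothing further is needed.
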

\begin{proof} It is an immediate consequence of Theorem~\ref{t_2}, since H4D implies H4B in an obvious way.
\end{proof}
\begin{rem} Due to the symmetry property, hypothesis $4D$ is obviously equivalent to what we could call hypothesis H4D', that would read as
\begin{itemize}
\item[\textit{H4D'.}] We have that $\lim_{\lambda \to -\infty} d(\lambda,c) = \lim_{\lambda \to \infty} d(\lambda,c)\in{\mathbb R}$ for any $c \in \mathbb{R}$.
\end{itemize}

We have made explicit the previous remark since H4D and H4D' imply that $d$ can be extended to a class two map on the periodic domain of the form $M=\mathbb{S}^1 \times \mathbb{S}^1 \setminus \{(\alpha,\alpha) \in \mathbb{S}^1 \times \mathbb{S}^1:\alpha \in \mathbb{S}^1\}$, provided that the matching at the infinity points induced by H4D and H4D' is smooth enough. Since $M$ is diffeomorphic to a cylinder, a possible way to produce distances on $\mathbb{R}$ would be, roughly speaking, to find class two scalar fields on a cylinder that are positive (hypothesis 1), symmetric (hypothesis 2) and with non-negative cross partial derivative after applying the already mentioned diffeomorphism (hypothesis 3).
\end{rem}

\subsection{An extension for sufficient conditions}
Before applying the previous theorems to some examples, it will be convenient to weaken their hypotheses, specially the one involving the required smoothness for $d$. In order to do so, first, we take into account the following trivial remark.

\begin{rem}\label{obsini} Suppose that $h : \mathbb{R} \to \mathbb{R}$ is a bijective map. Then, $d(x,y)$ defines a distance on $\mathbb{R}$ if and only if $(d \circ (h \times h))(x,y)=d(h(x),h(y))$ defines a distance on $\mathbb{R}$. In particular, $d$ fulfills H1 and H2 in Theorems~\ref{t_1},~\ref{t_2},~\ref{t_3}, and~\ref{t_4} if and only if $(d \circ (h \times h))$ fulfills hypotheses H1 and H2.
\end{rem}

The previous remark states, essentially, that being a distance does not depend on the coordinates that we are considering. We highlight that, in principle, this map $h$ would not need to be even continuous, measurable or to have any nice property. Nevertheless, the interest of the previous remark is that, in some examples and for some points $(x,y)$ outside the diagonal, the distance $d$ may not be regular enough in order to apply any result of the previous section. This problem can be avoided after considering the distance $d \circ (h \times h)$ for a suitable smooth choice of $h$, instead of simply considering the distance $d$. In practice, the choice for the function $h$ will be $h(x)=x^{2n+1}$ for a suitable natural number $n\in \mathbb{N}$. The reason for this choice is that, in many expressions, we have addends like $\vert x \vert^p$ that are not smooth enough in order to apply the previous theorems when $p>0$ is too low, but we can make $\vert h(x) \vert^p$ to be smooth enough after choosing a sufficiently large value for $n$. In this sense, we take into account the following well known remark.

\begin{rem} For any fixed $p>0$, the regularity of the function $g(x)=\vert h(x) \vert^{p}=\vert x \vert^{(2n+1)p}$ increases with respect to $n$. Specifically, $g \in \mathcal{C}^m \left(\mathbb{R}\right)$, where $m={\lceil (2n+1)p \rceil-1}$. In particular, $g \in \mathcal{C}^2 \left(\mathbb{R}\right)$ whenever $(2n+1)p>2$.
\end{rem}

The consideration made in the previous remark, and the fact that such a choice for $h$ is bijective and increasing, are the motivation for the next two lemmas. First, we state and demonstrate a sufficient condition ensuring that $d \circ (h \times h)$ fulfills H3 on $\mathbb{R}^2 \setminus \Delta$, provided that $d$ satisfies H3 on a bit smaller set.

\begin{lem} \label{lem1} Consider a function $d:{\mathbb R}^2 \to [0,\infty))$ such that $d \in \mathcal{C}^2\left(\mathbb{R}^2 \backslash (\Delta \cup \Lambda),[0,\infty)\right)$, where the set $\mathbb{R}^2\setminus (\Delta \cup \Lambda)$ is dense in $\mathbb{R}^2\setminus \Delta$. Assume that it exists an increasing bijective differentiable map $h:\mathbb{R} \to \mathbb{R}$ such that $d \circ (h \times h) \in \mathcal{C}^2\left(\mathbb{R}^2 \backslash \Delta,[0,\infty)\right)$. If $\partial_{12} d$ is non-negative on $\mathbb{R}^2\setminus (\Delta \cup \Lambda)$, then $\partial_{12} (d \circ (h \times h))$ is non-negative outside of $\Delta$.
\end{lem}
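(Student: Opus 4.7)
The plan is to set $D \coloneqq d \circ (h \times h)$ and verify the inequality $\partial_{12} D \geq 0$ first on a dense subset of $\mathbb{R}^2 \setminus \Delta$ via the chain rule, and then extend by continuity using the hypothesis $D \in \mathcal{C}^2(\mathbb{R}^2 \setminus \Delta, [0,\infty))$.

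First I would identify the ``good set'' where the chain rule can be applied pointwise. Since $h$ is a strictly increasing bijection of $\mathbb{R}$, the product map $h \times h$ is a bijection of $\mathbb{R}^2$ that sends $\Delta$ onto $\Delta$. Therefore the preimage $\Omega \coloneqq (h \times h)^{-1}\bigl(\mathbb{R}^2 \setminus (\Delta \cup \Lambda)\bigr)$ lies inside $\mathbb{R}^2 \setminus \Delta$. For $(x,y) \in \Omega$ the point $(h(x),h(y))$ lies in an open neighbourhood where $d$ is $\mathcal{C}^2$, and $h$ is differentiable at $x$ and $y$, so by the standard chain rule
\[
\partial_{12} D(x,y) \;=\; \partial_{12} d(h(x),h(y)) \cdot h'(x) \cdot h'(y).
\]
Since $h$ is increasing, $h'(x), h'(y) \ge 0$, and by hypothesis $\partial_{12} d(h(x),h(y)) \ge 0$, so $\partial_{12} D(x,y) \ge 0$ on all of $\Omega$.

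Next I would upgrade this pointwise inequality on $\Omega$ to the whole set $\mathbb{R}^2 \setminus \Delta$. The map $h \times h$ is a homeomorphism (a continuous bijection between copies of $\mathbb{R}^2$ whose inverse $h^{-1} \times h^{-1}$ is continuous, since a strictly increasing bijection of $\mathbb{R}$ has continuous inverse), so it takes the dense subset $\mathbb{R}^2 \setminus (\Delta \cup \Lambda)$ of $\mathbb{R}^2 \setminus \Delta$ homeomorphically to itself; in particular $\Omega$ is dense in $\mathbb{R}^2 \setminus \Delta$. By hypothesis $D \in \mathcal{C}^2(\mathbb{R}^2 \setminus \Delta)$, so $\partial_{12} D$ is continuous there. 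A continuous function that is non-negative on a dense subset is non-negative everywhere, which yields $\partial_{12} D \ge 0$ on $\mathbb{R}^2 \setminus \Delta$.

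The only mildly subtle point is the density argument: one needs to be sure that the exceptional set $\Lambda$, which is only controlled through being complementary to a dense set in $\mathbb{R}^2 \setminus \Delta$, pulls back under $h \times h$ to something still complementary to a dense set. This is why I would make the homeomorphism step explicit. Everything else — chain rule, sign of $h'$, continuity extension — is routine, so I do not expect any real obstacle beyond this bookkeeping.
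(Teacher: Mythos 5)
Your argument is correct and follows essentially the same route as the paper's proof: apply the chain rule on the preimage of $\mathbb{R}^2\setminus(\Delta\cup\Lambda)$ under $h\times h$, use that $h\times h$ is a homeomorphism preserving $\Delta$ to see this preimage is dense in $\mathbb{R}^2\setminus\Delta$, and extend the sign condition by continuity of $\partial_{12}(d\circ(h\times h))$. No substantive differences.
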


\begin{proof}
First observe that, since $h$ is bijective increasing and continuous, it is a homeomorphism. Therefore, the function $\varphi(x,y)=(h(x),h(y))$, where $(x,y)\in{\mathbb R}^2$, is a homeomorphism as well and fulfills $\varphi(\Delta)=\Delta$. Hence, since $\mathbb{R}^2\setminus (\Delta \cup \Lambda)$ is dense in $\mathbb{R}^2\setminus \Delta$, $\varphi^{-1}(\mathbb{R}^2\setminus (\Delta \cup \Lambda))$ is dense in $\varphi^{-1}(\mathbb{R}^2\setminus \Delta)=\mathbb{R}^2\setminus \Delta$.

Due to the chain rule, we have that \[ \partial_{12} (d \circ \varphi)(x,y)=\partial_{12} (d \circ (h \times h))(x,y)= h'(x) \cdot h'(y) \cdot (\partial_{12} d)(h(x),h(y)),\] and this expression is valid whenever ${\varphi(x,y)} \in \mathbb{R}^2\setminus (\Delta \cup \Lambda)$, {that is, $(x,y)\in\varphi^{-1}(\mathbb{R}^2\setminus (\Delta \cup \Lambda))$}. Besides, since $h$ is increasing and $\partial_{12}d$ is non-negative, we conclude that $\partial_{12} (d \circ (h \times h))$ is non-negative whenever {$(x,y)\in\varphi^{-1}(\mathbb{R}^2\setminus (\Delta \cup \Lambda))$}. Finally, since $d \circ (h \times h) \in \mathcal{C}^2\left(\mathbb{R}^2 \backslash \Delta,[0,\infty)\right)$, and $\varphi^{-1}(\mathbb{R}^2\setminus (\Delta \cup \Lambda))$ is dense in $\varphi^{-1}(\mathbb{R}^2\setminus \Delta)=\mathbb{R}^2\setminus \Delta$, we conclude that $\partial_{12} (d \circ (h \times h))$ is non-negative outside of $\Delta$.
\end{proof}

Finally, we state a second lemma ensuring that if $d$ fulfills one of the versions of H4, so does $d \circ (h \times h)$. We deliberately exclude hypothesis H4C, since the change of coordinates induced by $h$ can break the vanishing property for the gradient of $d$ at infinity.

\begin{lem} \label{lem2} Consider a function $d\in{\mathcal C}({\mathbb R}^2,[0,\infty))$, together with an increasing bijection $h: \mathbb{R} \to \mathbb{R}$.
\begin{itemize}
\item If $d$ fulfills H4A, then $d \circ (h \times h)$ fulfills H4A.
\item If $d$ fulfills H4B, then $d \circ (h \times h)$ fulfills H4B.
\item If $d$ fulfills H4D, then $d \circ (h \times h)$ fulfills H4D.
\end{itemize}
\end{lem}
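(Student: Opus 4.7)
The plan is to handle all three implications by the same mechanism. Set $\widetilde d := d \circ (h \times h)$, so $\widetilde d(x,y) = d(h(x), h(y))$. The key preliminary observation is that any increasing bijection of $\mathbb R$ must satisfy $\lim_{\lambda \to +\infty} h(\lambda) = +\infty$ and $\lim_{\lambda \to -\infty} h(\lambda) = -\infty$: were $h$ bounded above by some $M$, surjectivity would fail because $M+1$ would have no preimage, and monotonicity rules out oscillation. This observation licenses the change of variable $\mu = h(\lambda)$ inside every limit of the form $\lambda \to \pm\infty$.

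With that in hand, the H4A clause is immediate from the strict monotonicity of $h$: for $x_1 < x_2 < a$, one has $h(x_1) < h(x_2) < h(a)$, so H4A for $d$ applied at the point $h(a)$ yields $\widetilde d(x_1, a) = d(h(x_1),h(a)) \geq d(h(x_2),h(a)) = \widetilde d(x_2, a)$, and the analogous argument on $(a, \infty)$ works symmetrically.

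For H4B, I would fix $a < b$, note that $h(a) < h(b)$ by monotonicity, and write
\[
\lim_{\lambda \to \pm\infty}[\widetilde d(b,\lambda) - \widetilde d(a,\lambda)] = \lim_{\mu \to \pm\infty}[d(h(b), \mu) - d(h(a), \mu)]
\]
after the substitution $\mu = h(\lambda)$. The required inequality then follows from H4B for $d$ applied to the pair $(h(a), h(b))$, with finiteness of both limits inherited from $d$. The H4D clause is handled identically by applying the same substitution to $\lim_{\lambda \to \pm\infty} \widetilde d(c,\lambda) = \lim_{\mu \to \pm\infty} d(h(c),\mu)$ and invoking H4D for $d$ at the point $h(c)$.

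I do not foresee any substantial obstacle; the entire content of the lemma is that an increasing bijection pushes forward monotonicity on intervals and limits at infinity in the expected way. The only item deserving an explicit line in the write-up is the preliminary observation $h(\lambda) \to \pm\infty$, which justifies each change of variable; once that is in place, the three clauses are one-line verifications. It is also worth flagging explicitly in the write-up why H4C was deliberately excluded: the gradient rescales by factors of $h'$ under the change of coordinates, and those factors need not remain bounded at infinity, so the vanishing of $\nabla d$ is not, in general, preserved by composition with $h \times h$.
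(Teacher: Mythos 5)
Your proof is correct and follows essentially the same route as the paper, which simply notes that $h$ is an order-preserving bijection and concludes each clause in one line; your write-up supplies the details the paper leaves implicit (the observation that $h(\lambda)\to\pm\infty$, justifying the substitution $\mu=h(\lambda)$ in the limits). The remark on why H4C is excluded also matches the paper's own comment preceding the lemma.
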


\begin{proof}
For the first part, we have that, for any fixed $a \in \mathbb{R}$, the function $d(\cdot, a)$ is non-increasing on the interval $(-\infty,a)$ and non-decreasing on the interval $(a, \infty)$. Since $h$ is bijective and it preserves the order in the real line, $d \circ (h \times h)$ fulfills H4A.

For the second part, we have that $\lim_{\lambda \to +\infty} [d(b,\lambda)-d(a,\lambda) ]\leq \lim_{\lambda \to -\infty} [d(b,\lambda)-d(a,\lambda)]$ for every pair $(a,b)$ with $a<b$. Since $h$ is bijective and it preserves the order in the real line, $d \circ (h \times h)$ fulfills H4B.

For the last part, we have that $\lim_{\lambda \to -\infty} d(c,\lambda) = \lim_{\lambda \to \infty} d(c,\lambda)$ for any $c \in \mathbb{R}$, and that this value is finite. Since $h$ is bijective and it preserves the order in the real line, $d \circ (h \times h)$ fulfills H4D.
\end{proof}

As a consequence of all the previously exposed material, the main result of this part of the section can be stated and proved as follows.

\begin{thm}\label{t_5} Consider a function $d: \mathbb{R}^2 \to [0,\infty))$ such that $d \in \mathcal{C}^2\left(\mathbb{R}^2 \backslash (\Delta \cup \Lambda),[0,\infty)\right)$, where $\mathbb{R}^2\setminus (\Delta \cup \Lambda)$ is dense in $\mathbb{R}^2\setminus \Delta$. Assume that it exists an increasing bijective differentiable map $h:\mathbb{R} \to \mathbb{R}$ such that $d \circ (h \times h) \in \mathcal{C}^2\left(\mathbb{R}^2 \backslash \Delta,[0,\infty)\right)$. Finally, suppose also that $d$ fulfills the following properties:
\begin{enumerate}
\item[H1.] $d(x,y) > 0$ for every $(x,y) \not \in \Delta$, and $d(x,x) =0$ for every $x \in \mathbb{R}$.
\item[H2.] $d(x,y) = d(y,x)$ for every $(x,y) \in \mathbb{R}^2$.
\item[H3'.] $\partial_{12} d (x,y) \geq 0$ for every $(x,y) \not \in \Delta \cup \Lambda$.
\item[H4.] The function $d$ fulfills at least one of the hypotheses $4A$, $4B$ or $4D$ in Theorems~\ref{t_1},~\ref{t_2} or~\ref{t_4}.
\end{enumerate}
Then, $d$ defines a distance on $\mathbb{R}$.
\end{thm}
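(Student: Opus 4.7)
The plan is to reduce the problem to one of the previously established theorems (Theorem~\ref{t_1}, \ref{t_2}, or \ref{t_4}) via the smooth change of coordinates $h \times h$, and then invoke Remark~\ref{obsini} to pull the conclusion back to $d$ itself. The intuition is that $d$ may fail to be $\mathcal{C}^2$ on the auxiliary set $\Lambda$, which prevents the direct application of the earlier theorems, but the composition $\widetilde{d} := d \circ (h \times h)$ smooths out this defect by hypothesis, so the earlier machinery applies to $\widetilde{d}$.

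First I would verify that $\widetilde{d}$ satisfies all the required hypotheses of one of Theorems~\ref{t_1},~\ref{t_2}, or~\ref{t_4}. For H1 and H2, since $h$ is an increasing bijection of $\mathbb{R}$, it preserves the diagonal $\Delta$ set-theoretically (namely $(h(x),h(y)) \in \Delta$ iff $(x,y) \in \Delta$), so positivity off the diagonal and vanishing on the diagonal transfer directly; similarly, symmetry of $d$ yields symmetry of $\widetilde{d}$. For H3, I would invoke Lemma~\ref{lem1}, which is exactly tailored to this situation: it guarantees that $\partial_{12} \widetilde{d} \geq 0$ on all of $\mathbb{R}^2 \setminus \Delta$, even though the sign hypothesis on $d$ was only assumed on the smaller set $\mathbb{R}^2 \setminus (\Delta \cup \Lambda)$. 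For H4, I would invoke Lemma~\ref{lem2} in each of the three relevant cases (H4A, H4B, H4D) to conclude that whichever of these hypotheses $d$ satisfies, $\widetilde{d}$ inherits the same property.

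Second, having checked that $\widetilde{d}$ satisfies H1, H2, H3 and at least one of H4A, H4B, or H4D, together with the regularity $\widetilde{d} \in \mathcal{C}^2(\mathbb{R}^2 \setminus \Delta, [0,\infty))$ that is part of the assumption of the theorem, I can directly apply whichever of Theorems~\ref{t_1},~\ref{t_2}, or~\ref{t_4} corresponds to the version of H4 satisfied, and deduce that $\widetilde{d}$ is a distance on $\mathbb{R}$.

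Finally, I would appeal to Remark~\ref{obsini}: since $h: \mathbb{R} \to \mathbb{R}$ is a bijection and $\widetilde{d} = d \circ (h \times h)$ is a distance on $\mathbb{R}$, it follows that $d$ itself is a distance on $\mathbb{R}$. I do not expect any substantial obstacle here, since the two preparatory lemmas were designed precisely to carry out this transfer; the only subtlety is that Lemma~\ref{lem1} uses a density argument to propagate the sign of $\partial_{12} \widetilde{d}$ across the preimage of $\Lambda$, and I should make sure to check that the density hypothesis on $\mathbb{R}^2 \setminus (\Delta \cup \Lambda)$ is genuinely used via the continuity of $\partial_{12} \widetilde{d}$ on $\mathbb{R}^2 \setminus \Delta$. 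Beyond that bookkeeping point, the proof is essentially a concatenation of the three lemmas with the appropriate theorem.
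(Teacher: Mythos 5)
Your proposal is correct and follows essentially the same route as the paper's proof: transfer H1 and H2 to $d \circ (h \times h)$ via Remark~\ref{obsini}, obtain H3 from Lemma~\ref{lem1}, obtain the relevant H4 variant from Lemma~\ref{lem2}, apply the corresponding Theorem~\ref{t_1},~\ref{t_2} or~\ref{t_4}, and pull the conclusion back to $d$ with Remark~\ref{obsini}. No discrepancies to report.
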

\begin{proof} Depending on whether $d$ fulfills hypothesis H4A, H4B or H4D we shall use Theorem~\ref{t_1},~\ref{t_2} or~\ref{t_4} to conclude. First, if $d$ fulfills H1 and H2 so does $d \circ (h \times h)$, due to Remark~\ref{obsini}. Second, Lemma~\ref{lem1} allows to deduce H3' for $d \circ (h \times h)$. Third, at least one of the hypotheses H4A, H4B or H4D is fulfilled by $d$ and, due to Lemma~\ref{lem2}, also by $d \circ (h \times h)$. We highlight that $d \circ (h \times h)$ is a continuous map and that its cross partial derivative is smooth enough outside the diagonal because of the hypotheses. Therefore, $d \circ (h \times h)$ is a metric and, consequently, $d$ is a metric due to Remark~\ref{obsini}.
\end{proof}

\section{Examples}
Here we present some classical examples of distances to which the previous criteria can be applied to provide a proof that they are distances. Initially, we provide five examples of five candidates to distances $d$ and, in each case, via Theorems~\ref{t_1},~\ref{t_2},~\ref{t_3},~\ref{t_4} or the general version in~\ref{t_5}, we deduce that $d$ is indeed a distance. More examples of known distances can be found in \cite{Deza2013}.
%
\begin{exa}[Concave translation invariant metric] The function $d(x,y)=g(\left\lvert y-x\right\rvert)$ is a distance, whenever $g\in\mathcal{C}^2([0,\infty))$ is concave, $g(0)=0$ and $g(x)>0$ if $x>0$.\par

Let us check the hypothesis of Theorem~\ref{t_1}. We have the required regularity outside of the diagonal, and also positive definitness and symmetry. If $(x,y)\notin\Delta$,
\[ \partial_{12}d(x,y)= \begin{dcases}-g''(y-x), & \text{ if } x<y , \\ -g''(x-y), & \text{ if } x> y,\end{dcases}\] 
and since $g$ is concave and twice differentiable on $(0,\infty)$, $\partial_{12}d(x,y)\geq 0$ for $(x,y)\notin\Delta$. Lastly, such $g$ is necessarily a non-decreasing function, so H4A in Theorem~\ref{t_1} is also satisfied, and $d$ is a metric.

Recall that we had already studied translation invariant metrics in Section 2. Therefore, even without the differentiability assumption, we could have stated that a positive concave function $g$ defined on $(0,\infty)$ is necessarily subadditive and non-decreasing. Hence, by Corollary~\ref{c1}, the function defined as $d(x,y)=g(\left\lvert y-x\right\rvert)$ is a distance.
\end{exa}

\begin{exa}[$p$-relative metric] \label{e_2} Given $p\in[1,\infty)$,
the function \[ \displaystyle d(x,y)=	\begin{dcases}\frac{\left\lvert y-x\right\rvert}{\left(\left\lvert x\right\rvert^p+\left\lvert y\right\rvert^p\right)^{\frac{1}{p}}}, & (x,y)\in{\mathbb R}^2,\ (x,y)\ne 0,\\ 0, & (x,y)=0,\end{dcases}\]  is a distance.
\begin{figure}[h!]
	\centering
	\includegraphics[width=.5\linewidth]{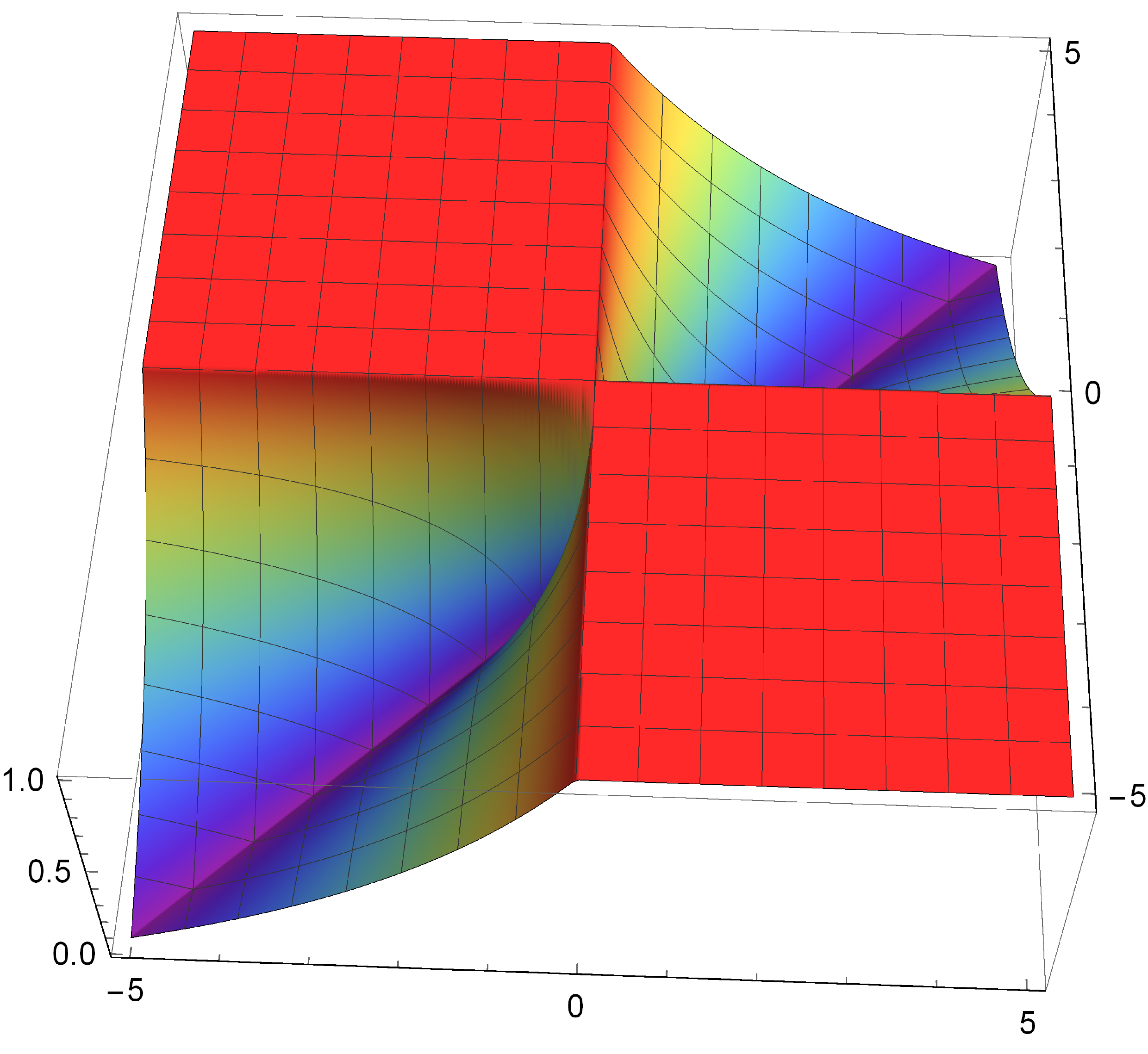}\includegraphics[width=.5\linewidth]{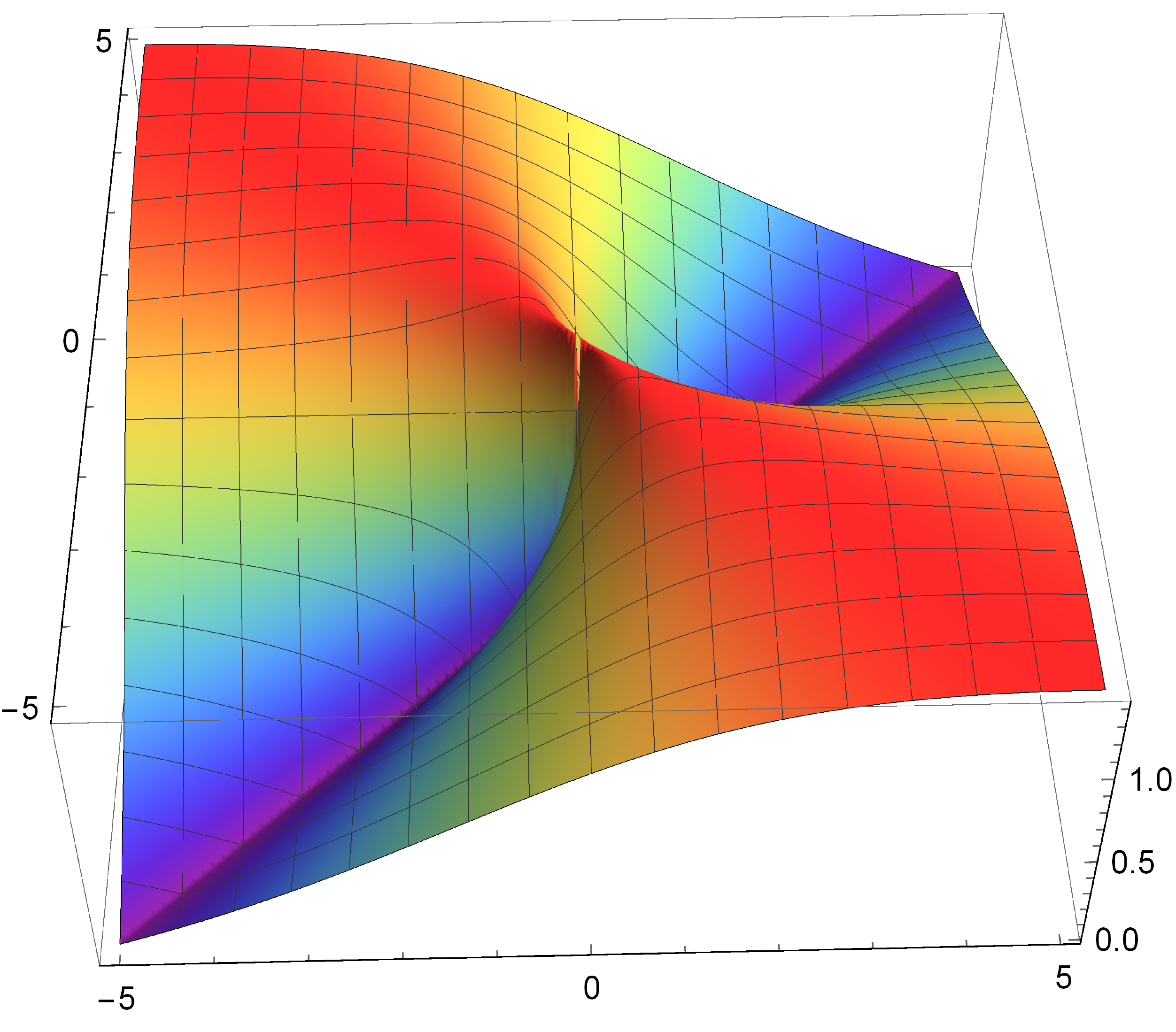}\caption{$1$-relative (left) and $2$-relative (right) metrics.}\label{prl}
\end{figure}

In order to apply any of the previous results, the only possible option is to check the hypotheses of Theorem~\ref{t_5}, since $d$ is not smooth on the set $\Lambda$ of the points $(x,y)$ where $x \cdot y=0$, at least when $p$ is small enough.

First, if we select $h(x)=x^{2n+1}$ in a such a way that $(2n+1)p>2$ we are able to ensure the condition $d \circ (h \times h) \in \mathcal{C}^2\left(\mathbb{R}^2 \backslash \Delta,[0,\infty)\right)$. In particular, we highlight that, in contrast to what happens with $d$, $d \circ (h \times h)$ is two times continuously differentiable on $\mathbb{R}^2\setminus \Delta$.

As usual, positive definiteness and symmetry are clear. Besides, hypothesis H4D is immediate to check, since \[ \lim_{\lambda \to - \infty} d(c,\lambda) = \lim_{\lambda \to - \infty} \frac{\left\lvert \lambda-c\right\rvert}{\left(\left\lvert c\right\rvert^p+\left\lvert \lambda\right\rvert^p\right)^{\frac{1}{p}}}= 1 =\lim_{\lambda \to + \infty} \frac{\left\lvert \lambda-c\right\rvert}{\left(\left\lvert c\right\rvert^p+\left\lvert \lambda\right\rvert^p\right)^{\frac{1}{p}}}= \lim_{\lambda \to +\infty} {d}(c,\lambda).\] Thus, we only need to check H3' or, in other words, that the cross partial derivative of $d$ is non-negative outside $\Delta \cup \Lambda$. Due to symmetry we can assume $x>y$. An straightforward computation for the three cases ($x>y> 0$, $x > 0>y$, $0 > x > y$) gives \[ \partial_{12}{d}(x,y)=\frac{\operatorname{sgn}(x)\left\lvert x\right\rvert^{2p-1}-\operatorname{sgn}(y)\left\lvert y\right\rvert^{2p-1}}{\left(\vert x \vert^p + \vert y \vert^p \right)^{2+\frac{1}{p}}} +p \operatorname{sgn}(xy) \frac{ \vert x \vert - \vert y \vert}{\left(\vert x \vert^p + \vert y \vert^p \right)^{2+\frac{1}{p}}} \vert x y \vert^{p-1}.\]  It is clear that $\partial_{12}{d}(x,y) \geq 0$ on the open ray of argument $-\frac{\pi}{4}$ Besides, this is also true for half of the first quadrant (where $x>y>0$), and for half of the fourth quadrant (where $0>-x>y$), since both addends involved in $\partial_{12} f$ are clearly positive.

The inequality for the two pending cases is derived as follows. For the case of half of the third quadrant $0>x>y$, we can write $\vert x \vert = \lambda \vert y \vert$ for some $0<\lambda<1$. After this substitution, cancellation of denominators, and taking into account the value for the sign function, we need to check that \[ \vert y \vert^{2p-1}(1-\lambda^{2p-1}) \geq p \vert y \vert^{2p-1} (1-\lambda) \lambda^{p-1}.\]  Equivalently, we have to explain why \[  1+p\lambda^p \geq p \lambda^{p-1}+\lambda^{2p-1}.\]  If we define the function \[ h(x)=(x+1)\lambda^x + (p-x)\lambda^{p+x},\]  since $h(0)= 1+p\lambda^p $ and $h(p-1)=p \lambda^{p-1}+\lambda^{2p-1}$, it is enough to prove that $h(0) \geq h(p-1)$. This is straightforward since $\log(\lambda) < 0$ and, for any $x \in (0,p-1)$, we have that \[ h'(x) = \log(\lambda)((x+2)\lambda^x+(p-x-1)\lambda^{p+x}) < 0.\] 

The pending case for half of the fourth quadrant $0>y>-x$ is similar to the previous one. We can write $\vert y \vert = \lambda \vert x \vert$ for some $0<\lambda<1$. We need to check that \[ \vert x \vert^{2p-1}(1+\lambda^{2p-1}) \geq p \vert x \vert^{2p-1} (1-\lambda) \lambda^{p-1}.\]  This is equivalent to showing that \[  1+p\lambda^p \geq p \lambda^{p-1}-\lambda^{2p-1},\]  but we have already verified this for the case where $\lambda^{2p-1}$ carries a plus sign.

	\end{exa}
\begin{exa}[Relative metric]
	The function 
	 \[ \displaystyle d(x,y)=	\begin{dcases}\frac{\left\lvert y-x\right\rvert}{\max \{\left\lvert x\right\rvert,\left\lvert y\right\rvert\}}, & (x,y)\in{\mathbb R}^2,\ (x,y)\ne 0,\\ 0, & (x,y)=0,\end{dcases}\]  is a distance.
\begin{figure}[h!]
	\centering
		\includegraphics[width=.5\linewidth]{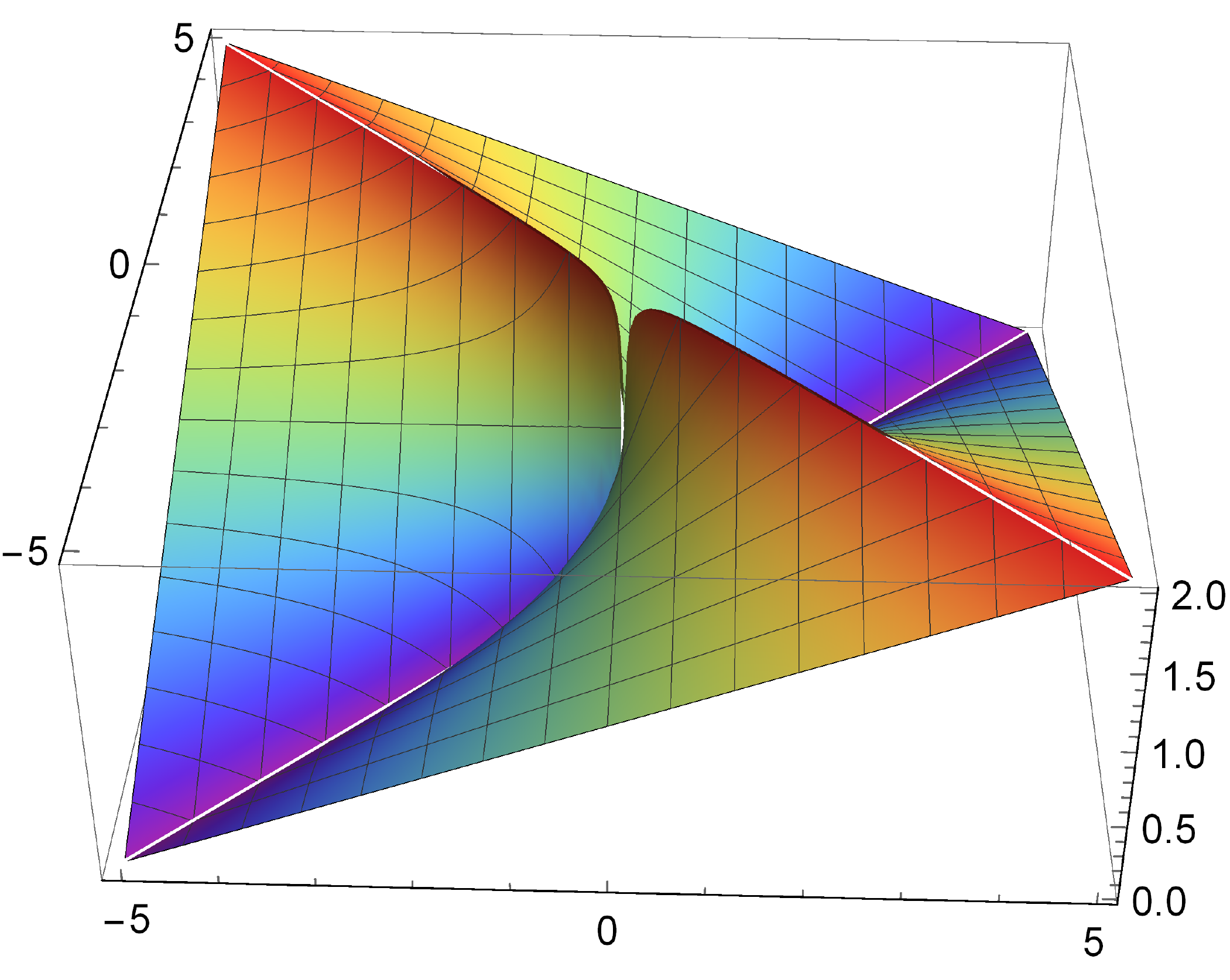}\caption{Relative metric.}\label{rl}
\end{figure}

The relative metric is the pointwise limit of the $p$-relative metrics when $p \to \infty$. Since positive definiteness, symmetry and triangle inequality are clearly preserved by taking limits on $p$, the function $d$ is a distance.

	\end{exa}
\begin{exa}[Chordal metric] The function $\displaystyle d(x,y)=\frac{2\left\lvert y-x\right\rvert}{\sqrt{1+x^2} \sqrt{1+y^2}}$, $x,y\in{\mathbb R}$ is a distance.
	\begin{figure}[h!]
		\centering
		\includegraphics[width=.5\linewidth]{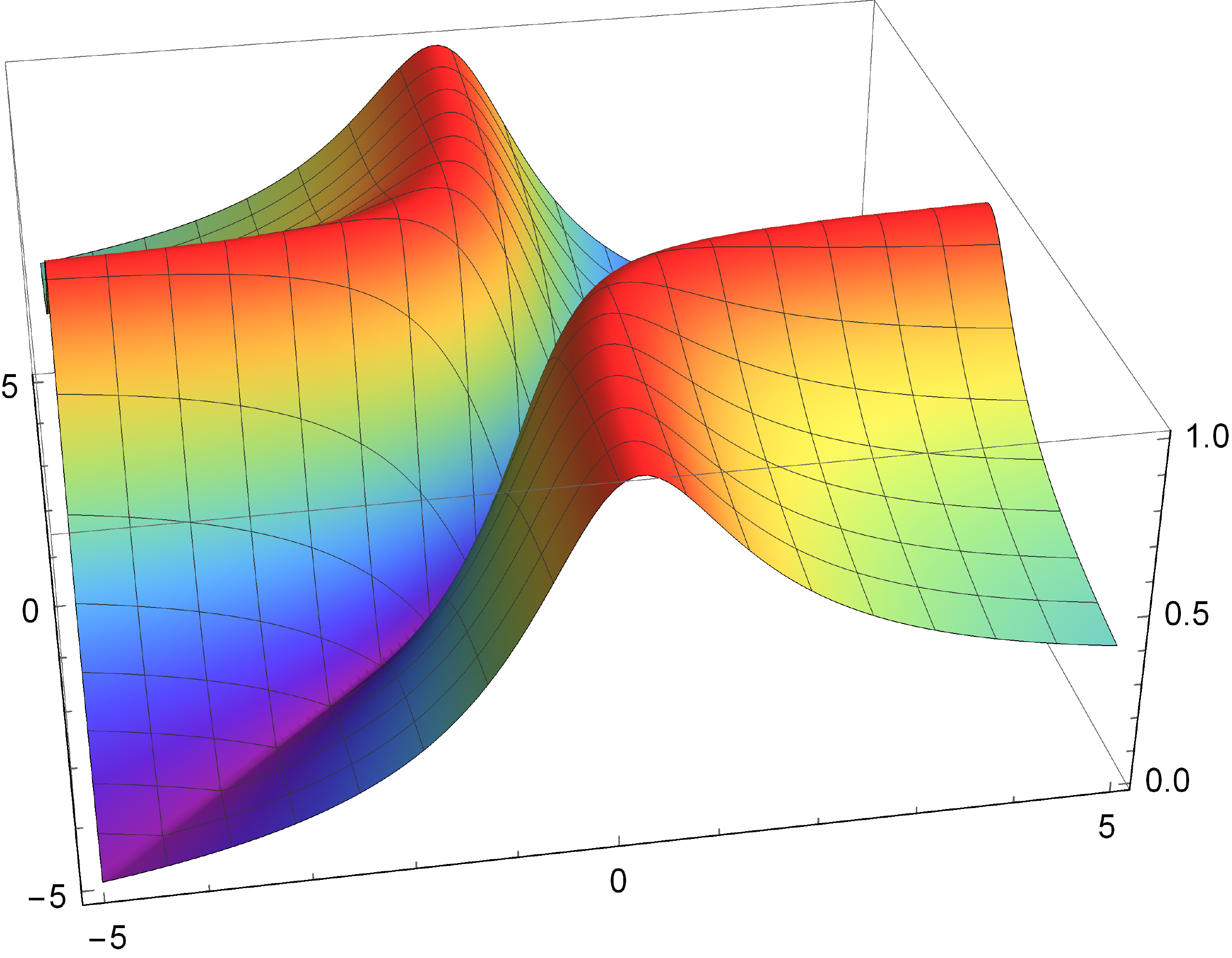}\caption{Chordal metric.}\label{cm}
	\end{figure}

	Let us check the hypotheses of Theorem~\ref{t_2}. We have that $d \in \mathcal{C}^2(\mathbb{R}^2\setminus \Delta, [0,\infty))$. Besides, positive definiteness and symmetry are clear. With respect to the cross partial derivative outside $\Delta$ we observe that \[ \partial_{12}d(x,y)=\operatorname{sgn}(x-y) \frac{2(x-y)}{\left(1+x^2\right)^{3 / 2}\left(1+y^2\right)^{3 / 2}}=\frac{2 \vert x-y \vert}{\left(1+x^2\right)^{3 / 2}\left(1+y^2\right)^{3 / 2}}.\] 
	which is greater than zero for every $(x,y)\notin\Delta$. Hence, we only need to check H4D. For any fixed $c$, we have that \[ \lim_{\lambda \to \infty} d(c,\lambda)=\lim_{\lambda \to \infty} \frac{2 (\lambda-c)}{\sqrt{1+c^2} \sqrt{1+\lambda^2}}=\frac{2}{\sqrt{1+c^2}} = \lim_{\lambda \to -\infty} \frac{2 (c-\lambda)}{\sqrt{1+c^2} \sqrt{1+\lambda^2}}=\lim_{\lambda \to -\infty} d(c,\lambda).\] 
\end{exa}

\begin{exa}[Generalized chordal metric]
Let $\alpha>0, \beta \geq 0, p \geq 1$. The function \[ d(x,y):=\frac{\left\lvert y-x\right\rvert}{\left(\alpha+\beta\left\lvert x\right\rvert^p\right)^{\frac{1}{p}} \cdot\left(\alpha+\beta\left\lvert y\right\rvert^p\right)^{\frac{1}{p}}},\]  is a distance.
\begin{figure}[h!]
	\centering
	\includegraphics[width=.5\linewidth]{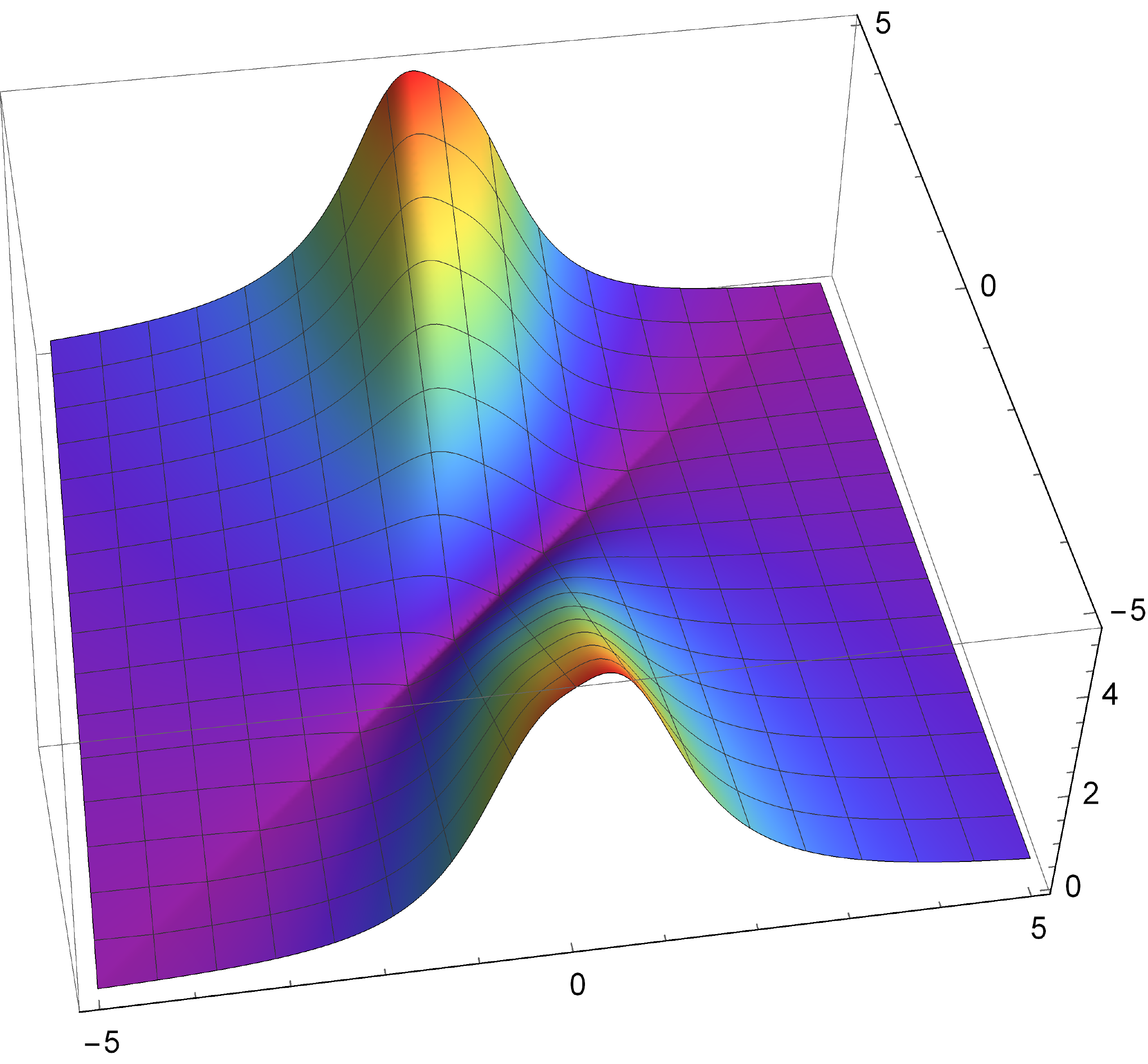}\caption{Generalized chordal metric for $\alpha=\beta=1$, $p=3$.}\label{gcm}
\end{figure}

In order to show that the previous function is a metric, we shall make some simplifications before checking that the cross partial derivative $\partial_{12}$ is non-negative. First note that \[ d(x,y):=\frac{1}{\alpha^{\frac{2}{p}}}\frac{\left\lvert y-x\right\rvert}{\left(1+\left(\beta/\alpha\right)\left\lvert x\right\rvert^p\right)^{\frac{1}{p}} \cdot\left(1+\left(\beta/\alpha\right)\left\lvert y\right\rvert^p\right)^{\frac{1}{p}}}.\]  With the idea of a change of variables, we rewrite the denominator \[ d(x,y):=\frac{1}{\alpha^{\frac{2}{p}}}\frac{\left\lvert y-x\right\rvert}{\left(1+\left(\beta^{\frac{1}{p}}\left\lvert x\right\rvert/\alpha^{\frac{1}{p}}\right)^p\right)^{\frac{1}{p}} \cdot \left(1+\left(\beta^{\frac{1}{p}}\left\lvert y\right\rvert/\alpha^{\frac{1}{p}}\right)^p\right)^{\frac{1}{p}}},\]  and the numerator \[ d(x,y):=\frac{1}{\alpha^{\frac{1}{p}}\beta^{\frac{1}{p}}}\frac{\left(\beta^{\frac{1}{p}}/\alpha^{\frac{1}{p}}\right)\left\lvert y-x\right\rvert}{\left(1+\left(\beta^{\frac{1}{p}}\left\lvert x\right\rvert/\alpha^{\frac{1}{p}}\right)^p\right)^{\frac{1}{p}} \cdot \left(1+\left(\beta^{\frac{1}{p}}\left\lvert y\right\rvert/\alpha^{\frac{1}{p}}\right)^p\right)^{\frac{1}{p}}}.\]  Thus, \[ d(g(x),g(y)):=\frac{1}{\alpha^{\frac{1}{p}}\beta^{\frac{1}{p}}}\frac{\left\lvert g(y)-g(x)\right\rvert}{\left(1+\left\lvert g(x)\right\rvert^p\right)^{\frac{1}{p}} \cdot\left(1+\left\lvert g(y)\right\rvert^p\right)^{\frac{1}{p}}},\]  where we have taken $g(s)=(\beta/\alpha)^{\frac{1}{p}}s$. Hence, if we define, $d_g=d \circ (g \times g)$, we need to check that \[ d_g(x,y):=\frac{\left\lvert y-x\right\rvert}{\left(1+\left\lvert x\right\rvert^p\right)^{\frac{1}{p}} \cdot\left(1+\left\lvert y\right\rvert^p\right)^{\frac{1}{p}}}\]  is a metric. Since the sufficiency theorems do not care about the smoothness of the metric candidate on the diagonal, the only apparent pending issue is the smoothness of its denominator. Hence, we shall use the map $h(x)=x^{2n+1}$ for $n$ such that $(2n+1)p > 2$ together with Theorem~\ref{t_5} in order to surpass this problem.

First, we have that $d_g \circ (h \times h)$ has the demanded regularity in Theorem~\ref{t_5}: $d_g$ is a class two function on $\mathbb{R}^2 \setminus \Delta$. Moreover, hypotheses H1 and H3 can be easily checked.

Besides, on the one hand, outside the diagonal ($\Delta$) or the points where $xy=0$ ($\Lambda$), the cross partial derivative is well-defined and it has the value \[ \partial_{12}d_g(x,y)=\frac{\operatorname{sgn}(y)\left\lvert y\right\rvert^{p-1}-\operatorname{sgn}(x)\left\lvert x\right\rvert^{p-1}}{\left(1+\left\lvert x\right\rvert^p\right)^{\frac{p+1}{p}} \cdot\left(1+\left\lvert y\right\rvert^p\right)^{\frac{p+1}{p}}},\]  which is greater than or equal to zero at any point. Therefore, H3 holds. On the other hand, H4 of Theorem~\ref{t_5} also holds for the case H4D, since \[ \lim_{\lambda \to - \infty} d_g(c,\lambda) = 1 = \lim_{\lambda \to +\infty} d_g(c,\lambda).\] 

\end{exa}
%
%
%

\bibliography{DR}
\bibliographystyle{spmpsciper}

\end{document}